\newtheorem{thm}{Theorem}[section]
\newtheorem{lem}[thm]{Lemma}
\newtheorem{prop}[thm]{Proposition}
\newtheorem{cor}[thm]{Corollary}
\numberwithin{equation}{section}
\theoremstyle{example}
\theoremstyle{definition}
\theoremstyle{notation}
\newtheorem{notation}[thm]{Notation}
\theoremstyle{remark}
\newtheorem{remark}[thm]{Remark}
\newcommand{\C}{{\mathbb C}}
\newcommand{\N}{{\mathbb N}}
\newcommand{\Z}{{\mathbb Z}}
\newcommand{\R}{{\mathbb R}}
\def\11{{\rm 1~\hspace{-1.4ex}l} }
\title
[Invariant measures for BO ]
{Invariant measures and long time behaviour for the  Benjamin-Ono equation}
\author[Nikolay Tzvetkov]{Nikolay~Tzvetkov}
\author[Nicola Visciglia]{Nicola Visciglia}
\address{D\'epartement de Math\'ematiques, Universit\'e de Cergy-Pontoise, 2, 
avenue Adolphe Chauvin, 95302 Cergy-Pontoise  
Cedex, France and Institut Universitaire de France}\email{nikolay.tzvetkov@u-cergy.fr}
\address{Universit\`a Degli Studi di Pisa, Dipartimento di Matematica,
Largo Bruno Pontecorvo 5 I - 56127 Pisa. Italy}\email{ viscigli@dm.unipi.it}
\begin{document}
\maketitle

\begin{abstract}
We study the Benjamin-Ono equation, posed on the torus.
We prove that an infinite sequence of weighted gaussian measures, constructed in our previous work, are invariant by the flow of the equation. 
These measures are supported by Sobolev spaces of increasing regularity. As a by product we deduce informations on the long time behaviour  
of regular solutions.
To our knowledge this is the first result which gives an evidence about recurrence properties of the Benjamin-Ono equation flow. 
\end{abstract}

Keywords: Invariant measures, Dispersive equations.
MSC: 35B40, 37K05

\section{Introduction}
This paper is a sequel of our previous works \cite{tz, TV}. It deals with the long time behaviour of the solutions of the Benjamin-Ono equation, posed on the torus.
The Benjamin-Ono equation is a fundamental dispersive equation modeling the propagation of long small amplitude internal waves. It is less dispersive than
the famous KdV equation (which models surface waves).  
Let us recall that the Cauchy problem analysis of this equation turned out to be quite interesting (see \cite{MST, BP,IK,M,T, MP}).
To our best knowledge, the long time behaviour in the periodic case for large data is a widely open problem. 
The main goal of this work is to make a progress on this question by constructing 
invariant measures. Therefore, thanks to Poincar\'e's theorem, we show  an evidence about recurrence properties of the Benjamin-Ono equation flow.
We point out that the measures are supported by Sobolev spaces of increasing regularity and consequently our result is of importance for the dynamics of regular solutions as well. We also note that for the KdV equation  more precise evidences of the recurrence of the flow are known (see e.g.  \cite{MT,KP,B0}).
Our approach uses heavily a probabilistic view point, both on the measure construction and on the measure invariance proof.
In particular, the arguments used in the present paper are less dependent on the properties of individual solutions
compared to previous works on invariant measures for dispersive equations (see e.g. \cite{zh,B,B2, BT1, BT2, BTT, tz_fourier,NORS, OH}). 
This roughly explains why the approach works even in such a weak dispersion situation. 
We hope that this aspect of our analysis may be useful in other contexts.

Consider thus the Benjamin-Ono equation
\begin{equation}\label{bo}
\partial_t u + H\partial_x^2 u + u\partial_x u=0
\end{equation}
where $H$ denotes the Hilbert transform, posed on the torus $\R|(2\pi\Z)$.
The Sobolev spaces are natural phase spaces for \eqref{bo}.
We have that the mean value $\int u$ is conserved under the flow of \eqref{bo}.
Hence it is not restrictive to study \eqref{bo} for initial data of zero mean value (no zero Fourier coefficient).
Indeed the general case can be reduced to the zero mean value case by considering the problem solved by $u(t)-\int u(0)$, which is \eqref{bo} plus a harmless transport term. Therefore we will consider zero mean value solutions of \eqref{bo} and we shall denote by $H^s$ the Sobolev space of zero mean value functions equipped with the usual norm. Thanks to the work of Molinet \cite{M} (see \cite{ABFS} for earlier results) the problem \eqref{bo} is globally well-posed in the Sobolev spaces $H^s$, $s\geq 0$. We note by $\Phi_t$, $t\in\R$ the flow established in \cite{M} and 
for every subset $A\subset H^s$ (with $s\geq 0$ fixed) and for every $t\in \R$ we
define the set $\Phi_t(A)$ as follows:
\begin{equation}\label{eq:PhiBO}\Phi_t(A)=\{u(t,.)\in H^s|  \hbox{ where }
u(t,.) \hbox{ solves } \eqref{bo} \hbox{ with } u(0,.)\in A\}.
\end{equation}
We now recall some notations from our previous paper \cite{TV}.
Smooth solutions to \eqref{bo} satisfy infinitely many conservation laws (see e.g. \cite{Mats, ABFS}).
More precisely for $k\geq 0$ an integer, there is a conservation law of \eqref{bo} of the form 
\begin{equation}\label{strucRmezzi}
E_{k/2}(u)= \|u\|_{\dot{H}^{k/2}}^2 + R_{k/2} (u) 
\end{equation}
where $\dot{H}^s$ denotes the homogeneous Sobolev norm on periodic functions and all the terms that appear in
$R_{k/2}$  are homogeneous in $u$ of order  larger or equal to three.
In the spirit of the works \cite{zh,B, LRS}, we shall define invariant measures for \eqref{bo} 
by re-normalizing the formal measure $\exp(-E_{k/2}(u))du$.
Denote by $\mu_{k/2}$ the gaussian measure induced 
by the random Fourier series 
\begin{equation}\label{randomized}
\varphi_{k/2}(x, \omega)=\sum_{n\in \Z \setminus \{ 0\}} 
\frac{\varphi_n(\omega)}{|n|^{k/2}} e^{{\bf i}nx}.
\end{equation}
In \eqref{randomized}, $(\varphi_{n}(\omega))$ is a sequence of centered complex gaussian variables 
defined on a probability space $(\Omega, {\mathcal A}, p)$ such that $\varphi_{n}=\overline{\varphi_{-n}}$
(since the solutions of \eqref{bo} should be real valued) and  
$(\varphi_{n}(\omega))_{n>0}$ are independent.
More precisely, we have that for a suitable constant $c$, $\varphi_{n}(\omega)=c(h_n(\omega)+{\bf i}l_n(\omega))$,
where $h_n,l_n\in {\mathcal N}(0,1)$ are standard real gaussians.
We have that $\mu_{k/2}(H^s)=1$ for every $s<(k-1)/2$ while $\mu_{k/2}(H^{(k-1)/2})=0$, i.e. for large $k$ the support of $\mu_{k/2}$ contains quite regular functions. 
For any $N\geq 1$, $k\geq 2$ and $R>0$ we introduce the function
\begin{equation}\label{Femme+1}
F_{k/2,N, R}(u)
=\Big(\prod_{j=0}^{k-2} \chi_R (E_{j/2}(\pi_N u)) \Big)
\chi_R (E_{(k-1)/2}(\pi_N u)-\alpha_N) 
e^{-R_{k/2}(\pi_N u)}
\end{equation}
where
$\alpha_N=\sum_{n=1}^N \frac {c}{n}$ for a suitable constant $c$, $\pi_N$ denotes the Dirichlet projector 
on Fourier modes $n$ such that $|n|\leq N$, $\chi_{R}$ is a cut-off function defined as $\chi_R(x)=\chi (x/R)$
with $\chi:\R \rightarrow \R$ a smooth,
compactly supported function such that $\chi(x)=1$ for every $|x|<1$.
Next we state the main result proved in \cite{TV}.
\begin{thm}\label{mainold}
For every $k\in \N$ with $k\geq 2$ there exists a $\mu_{k/2}$ measurable function 
$F_{k/2,R }(u)$ such that $F_{k/2,N, R}(u)$ converges to $F_{k/2,R }(u)$ in $L^q(d\mu_{k/2})$ 
for every $1\leq q<\infty$.
In particular $F_{k/2,R }(u)\in L^q(d\mu_{k/2})$. 
Moreover, if we set $d\rho_{k/2,R}\equiv F_{k/2,R }(u)d\mu_{k/2}$ then we have
$$
\bigcup_{R>0}{\rm supp}(\rho_{k/2,R})={\rm supp}(\mu_{k/2}).
$$
\end{thm}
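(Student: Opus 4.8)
The plan is to obtain $F_{k/2,R}$ as the $L^q(d\mu_{k/2})$-limit of the explicit functions \eqref{Femme+1} by first controlling, on the Gaussian field \eqref{randomized}, each quantity entering \eqref{Femme+1}. Note that $E_{j/2}(u)=\|u\|_{\dot H^{j/2}}^2+R_{j/2}(u)$ and $R_{k/2}(u)$ are fixed polynomials in $u$: finite sums of multilinear expressions $\int\prod_i\de_x^{a_i}u\,dx$ with at most $k$ derivatives in total and, in each $R_{j/2}$, at least three factors. Substituting $\pi_N\varphi_{k/2}$ and using \eqref{randomized} turns each of them into a finite linear combination of multilinear forms in the Gaussians $\{\varphi_n\}$, living in a fixed Wiener chaos of order $\le k+2$. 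The first step would therefore be a Fourier-side computation: expand the $L^2(d\mu_{k/2})$-norm of the difference between the $N$- and $M$-truncations, use Wick's formula to reduce the Gaussian moments to sums over pairings, and bound the resulting constrained frequency sums using the decay $|n|^{-k/2}$ of the coefficients together with the zero-sum relation among interacting frequencies. This shows $\{R_{k/2}(\pi_N u)\}_N$ and, for $0\le j\le k-2$, $\{E_{j/2}(\pi_N u)\}_N$ are Cauchy in $L^2(d\mu_{k/2})$; for $j=k-1$ the cubic-and-higher part $R_{(k-1)/2}(\pi_N u)$ converges likewise, but the quadratic part $\|\pi_N u\|_{\dot H^{(k-1)/2}}^2$ has divergent mean, so one subtracts $\al_N=\mathbb E\,\|\pi_N u\|_{\dot H^{(k-1)/2}}^2=\sum_{n=1}^N c/n$ and checks that $\{E_{(k-1)/2}(\pi_N u)-\al_N\}_N$ is $L^2$-Cauchy, the point being that the variances $\mathrm{Var}\big(|n|^{k-1}|\widehat u(n)|^2\big)$ form a summable sequence. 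Nelson's hypercontractivity then upgrades all of this to $L^q$ for every $q<\infty$; write $R_{k/2}(u)$, $\widetilde E_{(k-1)/2}(u)$, $E_{j/2}(u)$ for the limits.

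The second step would be the uniform bound $\sup_N\|F_{k/2,N,R}\|_{L^q(d\mu_{k/2})}<\infty$ for all $q<\infty$. Here the cut-offs $\chi_R(E_{j/2}(\pi_N u))$, $0\le j\le k-1$ (the last renormalized), localize to the region where all lower conservation laws of \eqref{bo} are $O(R)$; propagating this down the hierarchy — and using that the remainders $R_{j/2}$ are of order $\ge 3$ — controls the relevant Sobolev norms of $\pi_N u$, and combining this localization with the Wiener-chaos estimates of the first step yields the exponential integrability of $e^{-R_{k/2}(\pi_N u)}$ against the cut-offs, uniformly in $N$. Granting these two steps, the rest is soft: the arguments of the $\chi_R$'s converge in $L^q(d\mu_{k/2})$, hence in measure, hence $\mu_{k/2}$-a.e. along a subsequence, so along that subsequence $F_{k/2,N,R}\to F_{k/2,R}(u):=\big(\prod_{j=0}^{k-2}\chi_R(E_{j/2}(u))\big)\chi_R(\widetilde E_{(k-1)/2}(u))\,e^{-R_{k/2}(u)}$ pointwise a.e.; the uniform $L^{q+1}$ bound supplies uniform integrability, so the full sequence converges in $L^q(d\mu_{k/2})$ and in particular $F_{k/2,R}\in L^q(d\mu_{k/2})$.

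For the support identity, $\spt(\rho_{k/2,R})\subseteq\spt(\mu_{k/2})$ is immediate from $d\rho_{k/2,R}=F_{k/2,R}\,d\mu_{k/2}$ with $F_{k/2,R}\ge0$. For the reverse inclusion, the key observation is that $F_{k/2,R}(u)>0$ exactly where all of $E_0(u),\dots,E_{(k-2)/2}(u)$ and $\widetilde E_{(k-1)/2}(u)$ lie below the plateau of $\chi_R$; since these limiting quantities (and $R_{k/2}(u)$) are $\mu_{k/2}$-a.e. finite, the sets $\{F_{k/2,R}>0\}$ increase, as $R\to\infty$, to a set of full $\mu_{k/2}$-measure. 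Hence every open $U$ with $\mu_{k/2}(U)>0$ satisfies $\rho_{k/2,R}(U)=\int_U F_{k/2,R}\,d\mu_{k/2}>0$ for $R$ large; taking $U$ to be arbitrarily small balls about a prescribed point of $\spt(\mu_{k/2})$, and using the monotonicity $\spt(\rho_{k/2,R_1})\subseteq\spt(\rho_{k/2,R_2})$ for $R_2/R_1$ large enough (from $F_{k/2,R_1}\le\|\chi\|_\infty^{\,k}F_{k/2,R_2}$, each $\chi_{R_1}$ being supported where $\chi_{R_2}\equiv1$), one concludes $\bigcup_{R>0}\spt(\rho_{k/2,R})=\spt(\mu_{k/2})$.

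The main obstacle is the exponential integrability in the second step. Because \eqref{bo} is very weakly dispersive, the remainders $R_{k/2}$ essentially live at the regularity threshold $(k-1)/2$ that bounds the support of $\mu_{k/2}$, so the uniform bound on $\big\|e^{-R_{k/2}(\pi_N u)}\prod_j\chi_R(\cdot)\big\|_{L^q(d\mu_{k/2})}$ cannot be read off from a crude pointwise estimate and must be wrung out of the precise algebra of the Benjamin--Ono hierarchy together with the localization provided by the cut-offs — a Bourgain-type argument which one must, moreover, run uniformly in the truncation parameter $N$. Identifying the correct counterterm $\al_N$ is part of the same circle of difficulties. The remaining steps, while combinatorially involved, are by now standard in the study of invariant measures for dispersive equations.
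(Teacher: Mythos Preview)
This theorem is not proved in the present paper; it is quoted as the main result of the authors' earlier work \cite{TV}, and no argument for it is given here. Your outline is nonetheless a faithful high-level sketch of the strategy carried out in \cite{TV}: Wiener-chaos estimates (via Wick's formula) for the multilinear pieces $E_{j/2}(\pi_N u)$ and $R_{k/2}(\pi_N u)$, the renormalization $E_{(k-1)/2}(\pi_N u)-\alpha_N$ to absorb the divergent mean of the critical quadratic term, hypercontractivity to pass from $L^2$ to all $L^q$, and a Bourgain-type argument for the uniform exponential integrability of $e^{-R_{k/2}(\pi_N u)}$ against the cut-offs. You also correctly flag the exponential-integrability step as the crux and correctly trace the difficulty to the weak dispersion of Benjamin--Ono, which places the worst terms of $R_{k/2}$ at the borderline regularity $(k-1)/2$ bounding the support of $\mu_{k/2}$; in \cite{TV} this is handled by isolating the precise cubic structure of those borderline terms (the same structural analysis reprised here as Proposition~\ref{prop:cubstruc}) and combining the deterministic Sobolev control furnished by the cut-offs with Gaussian-tail estimates for the remaining pieces. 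The support-identity argument you give is correct.
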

Our main contribution in this paper is the proof of the invariance
of the measures $\rho_{k/2,R}$
constructed in the previous theorem, provided that $k\geq 6$ is an even integer (a fact conjectured in \cite{TV}).
\begin{thm}\label{lem4}
For every even integer $k\geq 6$ and for every $R>0$ the measures $\rho_{k/2,R}$ 
are invariant under the flow associated with \eqref{bo}. More precisely
for every Borel set $A\subset H^\sigma$
with $2\leq \sigma<(k-1)/2$, and for every $t_0\in \R$ we have
$$\int_A  F_{k/2,R }(u)d\mu_{k/2} = \int_{\Phi_{t_0}(A)}  F_{k/2,R }(u)d\mu_{k/2}.$$ 
\end{thm}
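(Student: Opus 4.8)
The plan is to approximate the Benjamin-Ono flow $\Phi_t$ by a sequence of finite-dimensional Hamiltonian flows $\Phi_t^N$ for which invariance of an approximate measure is straightforward, and then pass to the limit using the convergence results of Theorem~\ref{mainold} together with quantitative control on the difference between $\Phi_t$ and $\Phi_t^N$ on the support of $\mu_{k/2}$. More precisely, I would introduce the truncated equation obtained by projecting \eqref{bo} onto Fourier modes $|n|\le N$, namely $\partial_t u + \pi_N(H\partial_x^2 u + \pi_N(u)\partial_x \pi_N(u)) = 0$, whose flow $\Phi_t^N$ leaves the finite-dimensional space $E_N=\pi_N H^\sigma$ invariant and acts as the identity on the high modes. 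This truncated ODE is Hamiltonian and preserves the Lebesgue measure $\prod_{|n|\le N} da_n\, db_n$ on $E_N$ (Liouville's theorem), and it also preserves each truncated energy $E_{j/2}(\pi_N u)$ (since on $E_N$ the projected equation is exactly the Hamiltonian ODE for $E_1(\pi_N u)$ and the lower conservation laws are in involution after truncation — this last point needs a small verification). Consequently the measure $d\rho_{k/2,R}^N \equiv F_{k/2,N,R}(u)\,d\mu_{k/2}$ is invariant under $\Phi_t^N$: the cutoff factors and the Gaussian weight $e^{-\|\pi_N u\|_{\dot H^{k/2}}^2}$ together with $e^{-R_{k/2}(\pi_N u)}$ recombine, up to the normalizing constant $\alpha_N$, into $e^{-E_{k/2}(\pi_N u)}$ times $\prod_j\chi_R(\cdots)$ times Lebesgue measure on $E_N$ and the Gaussian on the untouched high modes, and every factor is $\Phi_t^N$-invariant.

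The second ingredient is the approximation of $\Phi_t$ by $\Phi_t^N$. Here I would use the global well-posedness theory of Molinet~\cite{M} together with the fact that $\mu_{k/2}$ is supported on $H^\sigma$ for $\sigma<(k-1)/2$, so that for $k\ge 6$ one can take $\sigma\ge 2$ (indeed $\sigma>2$), a regularity at which the Benjamin-Ono flow is reasonably tame. One establishes, for $u_0$ in a set of large $\mu_{k/2}$-measure (controlled by the $H^\sigma$-norm via Fernique/Gaussian tail estimates), a bound of the form $\|\Phi_t(u_0)-\Phi_t^N(u_0)\|_{H^{\sigma'}} \to 0$ as $N\to\infty$, uniformly for $t$ in compact intervals and for $u_0$ in the chosen set, for some $\sigma'$ slightly below $\sigma$. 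Given this, for a fixed cylindrical or open set $A$ one shows $\Phi_t^N(A)$ converges to $\Phi_t(A)$ in the sense needed to match the integrals, using that $F_{k/2,R}$ is the $L^q(d\mu_{k/2})$-limit of $F_{k/2,N,R}$ and is $\mu_{k/2}$-measurable, hence integrable against indicator functions; the cutoffs $\chi_R$ keep everything bounded. Combining invariance of $\rho_{k/2,R}^N$ under $\Phi_t^N$ with the two convergences ($F_{k/2,N,R}\to F_{k/2,R}$ in $L^q$ and $\Phi_t^N\to\Phi_t$ on a large-measure set) yields $\int_A F_{k/2,R}\,d\mu_{k/2}=\int_{\Phi_{t_0}(A)}F_{k/2,R}\,d\mu_{k/2}$ first for nice sets $A$ and then, by a monotone class / regularity argument, for all Borel $A\subset H^\sigma$ with $2\le\sigma<(k-1)/2$. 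Finally, to upgrade from an inequality (which is all one usually gets directly, since measure can only be lost in the limit) to an equality, one applies the same estimate to $\Phi_{-t_0}$ and uses the group property $\Phi_{-t_0}\circ\Phi_{t_0}=\mathrm{Id}$, a standard device in this circle of ideas.

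The main obstacle I expect is the quantitative approximation $\Phi_t\approx\Phi_t^N$ in a norm compatible with the measure, precisely because Benjamin-Ono is only weakly dispersive and the nonlinearity $u\partial_x u$ loses a derivative; the gauge-transform machinery of~\cite{M,IK} that is needed to control the flow does not obviously commute with the crude Fourier truncation $\pi_N$, so one must either work at the relatively high regularity $\sigma\ge 2$ where energy estimates close more easily, or carefully track how the truncation interacts with the gauge. The authors' remark that their argument is "less dependent on the properties of individual solutions" suggests that the decisive technical point is an $L^q(d\mu_{k/2})$- or measure-theoretic control of the error $\Phi_t-\Phi_t^N$ rather than a pathwise estimate, so a realistic plan is: (i) prove invariance of $\rho_{k/2,R}^N$ exactly; (ii) prove an $L^2(d\mu_{k/2})$ (or in probability) bound on $\|\Phi_t(u)-\Phi_t^N(u)\|_{H^{\sigma'}}$ going to zero, exploiting the $\Phi_t^N$-invariance of $\rho^N_{k/2,R}$ itself to propagate estimates in time (a bootstrap that turns a local-in-time approximation into a global one); (iii) pass to the limit and symmetrize in $t_0$. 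Steps (i) and (iii) are essentially bookkeeping; step (ii) is where the real work lies.
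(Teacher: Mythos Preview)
Your overall architecture (truncate, use Liouville, approximate $\Phi_t$ by $\Phi_t^N$, symmetrize in $t_0$, pass from compacts to Borel sets) matches the paper, but your step~(i) contains a genuine error that is in fact the central difficulty of the whole argument. You assert that the truncated flow $\Phi_t^N$ preserves each $E_{j/2}(\pi_N u)$ because ``the lower conservation laws are in involution after truncation''. This is false for $j\ge 2$: the projection $\pi_N$ destroys the commutation relations, since the multilinear terms in $E_{j/2}$ produce Fourier modes outside $[-N,N]$ which are then cut off. Concretely, the paper computes (Propositions~\ref{defG} and~\ref{defGodd}) that
\[
\frac{d}{dt}E_{j/2}\big(\pi_N \Phi_t^N(u)\big)\Big|_{t=0}
=\sum_{p} c(p)\int p^*_N(\pi_N u)\,dx,
\]
where the $p^*_N$ involve factors like $\pi_{>N}(u\partial_x u)$; these are not zero. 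Consequently $d\rho_{k/2,R}^N$ is \emph{not} invariant under $\Phi_t^N$, and your scheme as written collapses at its first step. The paper flags this explicitly in the introduction: the energies $E_{k/2}$ ``are no longer conserved for the approximated problems \dots\ as long as $k\ge 2$''.

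What the paper actually does is prove \emph{asymptotic} invariance in an averaged sense: the quantities $G_N,H_N,L_N^{j_0}$ defined in \eqref{eq:G}--\eqref{eq:L} tend to zero in $L^q(d\mu_{k/2})$ (Proposition~\ref{ens}). This is where the probabilistic input lives, not in the comparison of $\Phi_t$ with $\Phi_t^N$, and it requires the structural analysis of Section~\ref{matconla} (in particular Proposition~\ref{prop:cubstruc}, isolating the single dangerous cubic term $c\int u(H\partial_x^m u)\partial_x^{m+1}u$) together with Lemma~\ref{intpar1} and the arithmetic Lemma~\ref{prod}. Proposition~\ref{ch-var} then converts $\|G_N\|_{L^q}\to 0$ etc.\ into
\[
\sup_{t,A}\Big|\frac{d}{dt}\int_{\Phi_t^N(A)}F_N\,d\mu\Big|\longrightarrow 0,
\]
which replaces your exact invariance. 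By contrast, the approximation $\Phi_t^N\to\Phi_t$ that you identify as ``where the real work lies'' is handled in the paper purely deterministically at regularity $\sigma>2$ (Proposition~\ref{prop:det}), with no probabilistic bootstrap needed. So you have correctly located the two ingredients but inverted which one is routine and which one is the heart of the matter.
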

Let us explain the main steps in the proof of Theorem~\ref{lem4}.
Once the existence of the measures  $\rho_{k/2,R}$ is established via a delicate renormalization procedure (see the main result in \cite{TV}), the basic
difficulty in order to prove their invariance comes from the fact that the energies $E_{k/2}$, that are conserved for the equation \eqref{bo}, 
are no longer conserved for the approximated problems (see \eqref{BON} below) as long as $k\geq 2$. 
However they are formally conserved in a suitable asymptotic sense which in the
Benjamin-Ono case is very weak.
Such an asymptotic conservation property is quite delicate (if possible) to be established for individual solutions on the support of $\rho_{k/2,R}$. 
Here we prove such an asymptotic conservation property only in an averaged sense and thus  
the main point in the proof of Theorem~\ref{lem4} is to reduce the analysis at time $t=0$. 
This is possible thanks to a key property at $t=0$ first introduced in our previous work \cite{TV} 
which enables one to invert the limit as the dimension goes to infinite 
with the limit occurring in the time derivatives of the energies $E_{k/2}$ along the truncated flows.
We also underline that the deterministic estimates used in this paper are rather classical
since we are mainly focused on high order conservation laws. 
In the proof of Theorem~\ref{lem4} it is of importance that we use the approximation flows, first introduced by Burq-Thomann and the first author in \cite{BTT}.
More precisely, in contrast with previous works as \cite{zh, B,tz_fourier}, we see the truncated flows and the truncated measures on a fixed infinite dimensional space (a suitable Sobolev space). 
This makes the limit properties of the truncated measures more flexible. 
The additional input in the analysis, compared to \cite{zh,B,tz_fourier}, 
is the invariance of complex gaussians under rotations, a fact applied to the evolution of the high frequencies modes of the truncated problems.

In \cite{zh} and \cite{NORS}, a similar difficulty of lack of conservation of the approximated problems occurs. 
We point out that in this paper, we are forced to solve this problem quite differently compared to \cite{zh,NORS}.
In \cite{zh, NORS} this problem is solved by establishing
energy growth estimates for individual solutions on the support of the measure. 
Such a deterministic approach meets serious difficulties in the context of the 
Benjamin-Ono equation and after spending a considerable amount of time in trying to follow this approach, we have been obliged to exploit the fact that asymptotic conservation property occur only in a weaker averaged sense. Let us emphasize that an analogue of \cite[Theorem 4.2]{NORS} for the Benjamin-Ono case is not used in 
our work and it is not clear whether such a property holds for individual solutions.   

As already mentioned, thanks to the Poincar\'e recurrence theorem (see e.g. \cite{zh}), we have the following corollary of Theorem~\ref{lem4}.
\begin{cor}\label{corollary}
Let $k\geq 6$ be an even integer and $0\leq \sigma<(k-1)/2$.
Then the solutions of  the Benjamin-Ono equation \eqref{bo} are recurrent in the following sense: for $\mu_{k/2}$ almost every $u_0\in H^\sigma$  
there exists a sequence of times $(t_n)_{n\geq 0}$ going to infinity such that 
$$
\lim_{n\rightarrow\infty}\|\Phi_{t_n}(u_0)-u_0\|_{H^\sigma}=0.
$$
\end{cor}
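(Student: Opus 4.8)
The plan is to deduce the corollary from Theorem~\ref{lem4} by a standard application of the Poincar\'e recurrence theorem, together with the density statement for the supports of the measures $\rho_{k/2,R}$ provided by Theorem~\ref{mainold}. First I would fix an even integer $k\geq 6$ and a value $\sigma$ with $2\leq \sigma<(k-1)/2$; the case $0\leq \sigma<2$ can be recovered at the end by observing that $\mu_{k/2}$-almost every $u_0$ lies in $H^{\sigma'}$ for any $\sigma'<(k-1)/2$, so recurrence in a stronger norm implies recurrence in the weaker $H^\sigma$ norm. By Theorem~\ref{mainold}, $F_{k/2,R}\geq 0$ is in $L^1(d\mu_{k/2})$, so $\rho_{k/2,R}$ is a finite Borel measure on $H^\sigma$, and by Theorem~\ref{lem4} it is invariant under the flow $\Phi_t$. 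The measurability of $(t,u)\mapsto \Phi_t(u)$ on $\R\times H^\sigma$ follows from the global well-posedness of Molinet \cite{M}, so $\Phi_t$ is a measurable measure-preserving transformation of the finite measure space $(H^\sigma,\rho_{k/2,R})$.

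Next I would invoke the Poincar\'e recurrence theorem in the following form: for a measure-preserving map $T$ on a finite measure space $(X,\nu)$ and any measurable set $B$ with $\nu(B)>0$, $\nu$-almost every point of $B$ returns to $B$ under iteration of $T$. Applying this to $T=\Phi_1$ (and, by a diagonal argument over a countable basis, to a countable family of balls) one obtains: for $\rho_{k/2,R}$-almost every $u_0$, there is a sequence of integer times $t_n\to\infty$ with $\Phi_{t_n}(u_0)\to u_0$ in $H^\sigma$. Concretely, let $(B_j)_{j\geq 1}$ be the countable collection of open balls in $H^\sigma$ with rational radii centered at points of a countable dense subset; for each $j$ with $\rho_{k/2,R}(B_j)>0$, discard the recurrence-failure null set, and intersect over all such $j$. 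On the complement, every $u_0$ lies in arbitrarily small balls $B_j$ containing it and returns to each of them infinitely often, which yields the desired sequence $t_n$.

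Finally I would upgrade the conclusion from "$\rho_{k/2,R}$-almost every" to "$\mu_{k/2}$-almost every." Since $d\rho_{k/2,R}=F_{k/2,R}\,d\mu_{k/2}$ with $F_{k/2,R}\geq 0$, a set is $\rho_{k/2,R}$-null only where it meets $\{F_{k/2,R}=0\}$ up to a $\mu_{k/2}$-null set; hence the recurrence property holds for $\mu_{k/2}$-almost every $u_0$ in $\{F_{k/2,R}>0\}\supset \mathrm{supp}(\rho_{k/2,R})$ (mod $\mu_{k/2}$-null sets). Letting $R\to\infty$ along a sequence and using the covering $\bigcup_{R>0}\mathrm{supp}(\rho_{k/2,R})=\mathrm{supp}(\mu_{k/2})$ from Theorem~\ref{mainold}, the exceptional set becomes $\mu_{k/2}$-null on all of $H^\sigma$. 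I expect the only genuinely delicate point to be the bookkeeping needed to make the almost-everywhere statements uniform: one must choose the countable family of balls once and for all, handle the $R$-dependence by a countable exhaustion, and check that the continuity of the flow map guarantees that returning to small balls forces $\|\Phi_{t_n}(u_0)-u_0\|_{H^\sigma}\to 0$; none of this is hard, but it is where care is required.
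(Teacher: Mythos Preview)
Your argument is correct and matches the paper's approach, which simply cites the Poincar\'e recurrence theorem (referring to \cite{zh}) without giving any further details. The only imprecision is in your last step: the inclusion $\{F_{k/2,R}>0\}\supset\mathrm{supp}(\rho_{k/2,R})$ modulo $\mu_{k/2}$-null sets does not follow from the topological support identity in Theorem~\ref{mainold} alone (a closed support can contain points where the density vanishes on a set of positive $\mu_{k/2}$-measure); what you actually need, and what follows directly from the construction of $F_{k/2,R}$ in \cite{TV}, is that $\mu_{k/2}\big(\bigcup_{R\in\N}\{F_{k/2,R}>0\}\big)=1$, which is exactly what upgrades the conclusion from $\rho_{k/2,R}$-a.e.\ to $\mu_{k/2}$-a.e.
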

Let us notice that very few results of this type are known for 
nonlinear hamiltonian PDE's with {\it large and  smooth} data . 
On one hand the KAM theory gives informations on the recurrence of the flow for small data. On the other hand the 
the invariant measures results provide recurrence properties 
at low regularity.
In this discussion a notable exception is the KdV equation for which it is known (see \cite{B2}, \cite{MT}) 
that the flow is almost periodic for $H^s$, $s\geq 0$ data which implies that for the KdV equation, 
the recurrence property displayed by Corollary~\ref{corollary} holds for {\it every} $H^s$, $s\geq 0$ function as initial datum. 
Establishing a similar property for the Benjamin-Ono equation is a challenging problem.

We believe that the result of Theorem~\ref{lem4} is true for every $k$ (even or odd). 
Here, we decided to restrict our attention only to the case of even $k\geq 6$ since 
{\it it already contains the phenomenon we would like to describe} and it avoids us to enter in
technicalities which would deviate us of the main message of this paper. 
Let us briefly explain what we think remains to be done in order to get the invariance of all measures $\rho_{k/2,R}$.
In the case of odd $k\geq 7$ one should rework the second main result of \cite{TV}. This would require an additional orthogonality 
argument compared with \cite{TV}. In the case of small $k$ a more sophisticated deterministic analysis, 
related with the low regularity well-posedness theory of the Benjamin-Ono equation, 
should be involved. \footnote{After this work was completed, a paper by Deng \cite{Deng} solving the problem for $k=1$ appeared.} 

The rest of the manuscript is devoted to the proof of Theorem~\ref{lem4}.
Next we fix some notations.
\begin{notation}
For every $N$ we denote by $\pi_N$ the projector on the first $n$ Fourier modes with
$|n|\leq N$ and $\pi_{>N}= 1-\pi_N$.\\
For every $\rho \in \R, r\geq 0$ we set $$B^\rho(r)=\{u\in H^\rho| \|u\|_{H^\rho}<r\}.$$
We denote by $\Phi_{t}$ the flow associated with the Benjamin-Ono equation. The corresponding truncated flow
$\Phi_{t}^N$ will be defined along section \ref{detsec}. \\
We denote by ${\mathcal B} (H^s)$ the $\sigma$-algebra of Borel subsets in $H^s$.\\
The randomized vector $\varphi_{k/2}(\omega,x)$ is defined in \eqref{randomized}
with $\omega$ delonging to the probability space $(\Omega, {\mathcal A}, p)$.
We denote by $L^q_\omega$ the associated Lebesgue spaces $L^q(\Omega, {\mathcal A},p)$.

\end{notation}
{\bf Acknowledgement.}
{\em Nikolay Tzvetkov is supported by the ERC project Dispeq,
Nicola Visciglia is supported by the FIRB project Dinamiche Dispersive:
Analisi di Fourier e Metodi Variazionali. The authors are grateful to the referee for the valuable remarks which
helped to improve the exposition.}
\section{On the structure of conservation laws}\label{matconla}
The main result of this section is Proposition~\ref{prop:cubstruc}. 
First we recall some notations introduced in \cite{TV}
to describe the structure of the conservation laws satisfied by solutions to \eqref{bo}
(for more details see Section~2 in \cite{TV}).\\\\
Given any function $u(x)\in C^\infty(S^1)$, we define
\begin{eqnarray*}
{\mathcal P}_1(u) & = & \{\partial_x^{\alpha_1} u, 
H\partial_x^{\alpha_1} u|\alpha_1\in \N\},
\\
{\mathcal P}_2(u) & = & \{\partial_x^{\alpha_1} 
u\partial_x^{\alpha_2} u, 
(H \partial_x^{\alpha_1} u)\partial_x^{\alpha_2} u,
(H \partial_x^{\alpha_1} u)(H \partial_x^{\alpha_2} u)|\alpha_1,\alpha_2\in \N\}
\end{eqnarray*}
and in general by induction
\begin{multline*}
{\mathcal P}_n(u)=\Big \{\prod_{l=1}^k H^{i_l}p_{j_l}(u)|
i_1,...,i_k\in \{0,1\}, 
\\
\sum_{l=1}^k j_l=n, k\in \{2,...,n\}
\hbox{ and } p_{j_l}(u)\in {\mathcal P}_{j_l}(u)\Big \}
\end{multline*}
where $H$ is the Hilbert transform.
\begin{remark}
Roughly speaking an element in 
${\mathcal P}_n(u)$ involves the product of 
$n$ derivatives $\partial_x^{\alpha_1} u, .., 
\partial^{\alpha_n}_x u$ 
in combination with the Hilbert transform $H$ 
(that can appear essentially in an arbitrary way in front of the factors and eventually in front of a group of factors).
\end{remark}
Notice that for every $n$ the simplest element belonging 
to $\mathcal P_n(u)$ has the following structure:
\begin{equation}\label{ptilde}
\prod_{i=1}^n \partial_x^{\alpha_i} u, \alpha_i\in \N.
\end{equation}
In particular we can define the map
$${\mathcal P}_n(u)\ni p_n(u)\rightarrow 
\tilde p_n(u)\in {\mathcal P_n}(u)$$
that associates to every $p_n(u) \in {\mathcal P}_n(u)$ the unique element
$\tilde p_n(u)\in {\mathcal P}_n(u)$ having the structure given in \eqref{ptilde}
where $\partial_x^{\alpha_1}u, \partial_x^{\alpha_2}u,
..., \partial_x^{\alpha_n}u$
are the derivatives involved in the expression of $p_n(u)$
(equivalently $\tilde p_n(u)$ is obtained from $p_n(u)$ by 
erasing all the Hilbert transforms $H$ that appear in $p_n(u)$).\\
Next, we associate to every $p_n(u)\in {\mathcal P_n}(u)$ 
two integers as follows:\\
$$\hbox{ if }\tilde p_n(u)=\prod_{i=1}^n \partial_x^{\alpha_i} u
\hbox{ then }$$ 
\begin{equation}\label{norm}|p_n(u)|:=\sup_{i=1,..,n} \alpha_i
\end{equation}
and
\begin{equation}\label{doublenorm}\|p_n(u)\|:=\sum_{i=1}^n \alpha_i.\end{equation}
We are ready to describe the structure of the 
conservation laws satisfied by the
Benjamin-Ono equation.
Given any even $k\in \N$, i.e. $k=2n$, the energy $E_{k/2}$ 
has the following structure:
\begin{multline}\label{even}E_{k/2}(u)
=  \|u\|_{\dot{H}^{n}}^2 + 
\sum_{\substack{p(u)\in {\mathcal P}_3(u) s.t. \\ 
\tilde p(u)=u\partial_x^{n-1} u \partial_x^{n}u}}
c_{2n}(p) \int p(u)dx
\\
+
\sum_{\substack{p(u)\in {\mathcal P}_{j}(u) s.t. j=3,..., 2n+2\\ 
\|p(u)\|= 2n-j+2\\ |p(u)|\leq n-1}} c_{2n}(p) \int p(u)dx 
\end{multline}
where $c_{2n}(p)\in \R$ are suitable real numbers.
Observe that the above representation is not unique.
For example 
$
\int u\partial_{x}^{n-1}u\partial_{x}^{n}u
$
which is a priori in the second term in the right hand-side of \eqref{even} can be written, after an integration by parts as
$
-\frac{1}{2}\int \partial_x u(\partial_{x}^{n-1}u)^2
$
which transfers it to the third term in the right hand-side of \eqref{even}.\\
For the sake of completeness and since we shall need it in the sequel we recall
that for $k\in \N$ odd, i.e. $k=2n+1$, 
the energy $E_{k/2}$ has the following structure:
\begin{equation}\label{odd}E_{k/2}(u)
=  \|u\|_{\dot{H}^{n+1/2}}^2 
+ 
\sum_{\substack{p(u)\in {\mathcal P}_j(u) s.t.j=3,...,2n+3\\ 
\|p(u)\|=2n-j+3\\|p(u)|\leq n}} c_{2n+1}(p) \int p(u)dx
\end{equation}
where $c_{2n+1}(p)\in \R$ are suitable real numbers.\\

The main result of this section is the following proposition concerning the structure of $E_{k/2}$ with $k$ even.
\begin{prop}\label{prop:cubstruc}
Let $k=2(m+1)$. Then one may assume that the only term of the second term in the right hand-side of \eqref{even} is given by
$$
 c \int u (H \partial_x^m u) \partial_x^{m+1}u dx 
$$
for a suitable constant $c$.
\end{prop}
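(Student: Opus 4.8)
The plan is to show that every cubic term appearing in the second sum of \eqref{even} — that is, every $\int p(u)\,dx$ with $p(u)\in{\mathcal P}_3(u)$ and $\tilde p(u)=u\,\partial_x^{m}u\,\partial_x^{m+1}u$ (here $n=m+1$) — can be rewritten, modulo integration by parts and modulo terms that get absorbed into the third sum of \eqref{even} (those with $|p(u)|\le n-1=m$), as a multiple of the single term $\int u\,(H\partial_x^m u)\,\partial_x^{m+1}u\,dx$. The starting point is to enumerate the finitely many possible placements of the Hilbert transform $H$ in an element of ${\mathcal P}_3(u)$ whose underlying monomial is $u\,\partial_x^m u\,\partial_x^{m+1}u$. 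Since $H$ may sit in front of each of the three factors or in front of a product of two of them (as in the inductive definition of ${\mathcal P}_n$), and since $H^2=-\mathrm{id}$ on zero-mean functions, there are only a bounded number of genuinely distinct candidates.

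Next I would reduce this list using two elementary identities under the integral sign. First, integration by parts lets one move a derivative off the highest-order factor $\partial_x^{m+1}u$; whenever this produces a term in which the top derivative order drops to $\le m$ on every factor, that term has $|p(u)|\le m$ and is legitimately moved into the third sum of \eqref{even}, so it may be discarded for the purposes of this proposition. Second, the self-adjointness (anti-self-adjointness, up to sign) of $H$ on $L^2$ of the torus, together with the fact that $H$ commutes with $\partial_x$, lets one transfer Hilbert transforms between factors: $\int (Hf)g = -\int f(Hg)$, and $\int (H(fg))h$ can be re-expressed by moving $H$ onto $h$. Combining these, any configuration with $H$ in front of $u$ or in front of a two-factor product is converted into configurations where $H$ acts only on one of the two differentiated factors, and then the relation $\int (H\partial_x^m u)\,\partial_x^{m+1}u\cdot u$ versus $\int \partial_x^m u\,(H\partial_x^{m+1}u)\cdot u$ is handled by noting the second equals $-\int (H\partial_x^m u)\,\partial_x^{m+1}u\cdot u$ plus lower-order (in the sense of $|p|$) commutator/Leibniz remainders coming from $\partial_x$ hitting $u$.

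The one configuration that needs a separate word is the "purely real" monomial $\int u\,\partial_x^m u\,\partial_x^{m+1}u\,dx$ with no $H$ at all: as already observed in the text right after \eqref{even}, this integrates by parts to $-\tfrac12\int \partial_x u\,(\partial_x^m u)^2\,dx$, which has $|p(u)|\le m$ and hence belongs to the third sum; so it contributes nothing new. After these manipulations the only surviving cubic term with $|p(u)|=m+1$ and $\|p(u)\|=2m+1$ having the prescribed underlying monomial is the asserted $c\int u\,(H\partial_x^m u)\,\partial_x^{m+1}u\,dx$, with all other contributions folded into the $\sum_{\|p\|=2n-j+2,\ |p|\le n-1}$ part of \eqref{even}, which is exactly the freedom the proposition allows ("one may assume").

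The step I expect to be the main obstacle is bookkeeping the integration-by-parts remainders carefully enough to be sure they all land in the admissible class $|p(u)|\le n-1$ with the correct homogeneity $\|p(u)\|=2n-j+2$ and $j$ in the allowed range, rather than generating a stray term of order $m+1$ that cannot be absorbed. In particular one must check that when $\partial_x$ falls on the undifferentiated factor $u$, the derivative count on the remaining two factors genuinely drops, so that no new top-order cubic survives; this is where the precise definitions \eqref{norm}–\eqref{doublenorm} of $|p(u)|$ and $\|p(u)\|$ have to be tracked with care.
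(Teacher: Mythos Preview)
Your reduction scheme has a genuine gap: integration by parts together with the anti-self-adjointness of $H$ are \emph{not} enough to reduce every cubic integral $\int p(u)\,dx$ with $p\in\mathcal{P}_3(u)$ and $\tilde p(u)=u\,\partial_x^m u\,\partial_x^{m+1}u$ to a multiple of the target $\int u(H\partial_x^m u)\partial_x^{m+1}u\,dx$ plus terms with $|p|\le m$. Consider for instance
\[
\mathrm{VI}=\int (Hu)(H\partial_x^m u)\,\partial_x^{m+1}u\,dx,\qquad
\mathrm{VII}=\int (Hu)\,\partial_x^m u\,(H\partial_x^{m+1}u)\,dx.
\]
Integration by parts gives $\mathrm{VI}+\mathrm{VII}=\text{(lower order)}$, and this is all one gets: the two terms only reduce to one another, never to the target $\mathrm{II}=\int u(H\partial_x^m u)\partial_x^{m+1}u\,dx$. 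Your sentence ``any configuration with $H$ in front of $u$ \dots\ is converted into configurations where $H$ acts only on one of the two differentiated factors'' is the false step: the identity $\int(Hf)g=-\int f(Hg)$ moves $H$ from $u$ onto the \emph{product} of the other two factors, not onto a single differentiated factor, and $H$ does not distribute over products. So if the representation \eqref{even} happened to contain $\mathrm{VI}$ alone, your argument would not close.

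The paper resolves this by going back to the explicit construction of the conservation laws via the Matsuno generating series $w(u)=\sum_n w_n(u)\varepsilon^n$ satisfying $-\varepsilon i P_-w_x+(1-e^{-w})=\varepsilon u$, splitting $w_n=w_n^L+w_n^Q+w_n^C+\cdots$ by homogeneity, and proving inductively (their Lemma~\ref{geo1} and Lemma~\ref{geo2}) that $\int w_{2m+4}^C(u)\,dx$ is a linear combination of integrals $\int \Lambda^{j_1}(u)\Lambda^{j_2}(u)\Lambda^{j_3}(u)\,dx$, where $\Lambda^0=\mathrm{id}$ and $\Lambda^j=(c_1+c_2H)\partial_x^j$ for $j\ge 1$. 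The key output is that the undifferentiated factor is always $\Lambda^0(u)=u$, \emph{never} $Hu$; hence terms of type $\mathrm{VI}$, $\mathrm{VII}$ are absent from the actual conservation law, and only the four integrals $\mathrm{I}$--$\mathrm{IV}$ (with first factor $u$) remain to be handled. Those four are then disposed of by exactly the elementary moves you describe. In short, the missing ingredient in your proposal is a structural statement about which cubic terms actually occur in $E_{k/2}$, and this requires input from the construction of the conservation laws themselves.
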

\begin{remark}
A similar statement holds for $k$ odd. We do not include it here, since as we already mentioned, in order to avoid some additional technicalities, we decided to restrict
our attention to $k$ even (and large). 
\end{remark}
\begin{proof}
We shall follow the Matsuno book \cite{Mats} where the structure of conservation laws satisfied by solutions of
\begin{equation}\label{bo4}
\partial_t u + H\partial_x^2 u + 4 u\partial_x u=0
\end{equation}
is studied.
Notice that $u$ solves \eqref{bo4} iff $\frac{1}{4} u$ solves \eqref{bo}. As a consequence one can check that it is sufficient
to prove the proposition by assuming that the energy $E_{m+1}$ 
is the one preserved by solutions to \eqref{bo4} (and not by solutions to \eqref{bo}, which is the true equation we are interested in).
In fact, the structure of the conservation laws respectively associated with \eqref{bo}
and \eqref{bo4}, are strictly related modulo some multiplicative factors
which are suitable powers of $1/4$. 

Following Matsuno we have that the conservation laws $E_{k/2}$ (satisfied by solutions to \eqref{bo4})
are obtained as follows.
First, given any function $u$, we introduce the power series
\begin{equation}\label{red}
w(u)=\sum_{n=1}^\infty w_n(u) \epsilon^n
\end{equation}
where $w_n(u)$ are densities that satisfy
\begin{equation}\label{idmatsuno}-\epsilon {\bf i} P_- w_x + (1- e^{-w})=\epsilon u
\end{equation}
and $P_{-}=\frac{1}{2}(1-{\bf i}H)$ is the projector on negative frequencies.
Then the quantities $\int w_n(u) dx$ are preserved along the evolution of the Benjamin-Ono equation
\eqref{bo4}.
Notice that in this language the conservation laws are parametrized by the natural numbers $n\in \N$
(and not by the rationals $k/2$ with $k\in \N$). More precisely 
the conservation law $E_{k/2}$ (for every $k\in N$) corresponds to $\int w_{k+2}(u) dx$.
\\
Notice that the content of the proposition concerns the expression
$\int w_{2m+4}(u) dx$.
By developing $e^{-w}$ then by \eqref{idmatsuno} we get:
\begin{equation}\label{idmatsuno2}
-\epsilon {\bf i} P_- w_x + (w-\frac{w^2}{2!}+\frac{w^3}{3!}-...)=\epsilon u.
\end{equation}
By inserting \eqref{red} in \eqref{idmatsuno2} and computing the terms appearing in front of the corresponding powers of $\epsilon$, we obtain that
$w_1(u)=u$ and for $n\geq 2$,
\begin{equation}\label{rec}
w_n(u)={\bf i} P_{-}\partial_x w_{n-1}(u)+
\sum_{k=2}^n\sum_{\substack{j_1+\cdots+ j_k=n\\j_1,\dots, j_k\geq 1}}c(j_1,\dots,j_k)w_{j_1}(u)\cdots w_{j_k}(u)
\end{equation}
for suitable constants $c(j_1,\dots,j_k)$.
Using a recurrence on $n$, we deduce from \eqref{rec} that $w_n(u)$ is a sum of homogenous expressions of $u$ of order between $1$ and $n$.
Thus we can write
\begin{equation}\label{geo}
w_n(u)=w_n^L(u)+w_n^Q(u)+w_n^C(u)+w^r_n(u)
\end{equation}
where :
\\
\\
$w_n^L(u)$ denotes the terms that appear $w_n(u)$ which are homogeneous of order $1$;\\
$w_n^Q(u)$ denotes the terms that appear $w_n(u)$ which are homogeneous of order $2$;\\
$w_n^C(u)$ denotes the terms that appear $w_n(u)$ which are homogeneous of order $3$;\\
$w_n^r(u)$ denotes the terms that appear $w_n(u)$ which are sums homogeneous terms of order $\geq 4$
(here $L$, $Q$, $C$, $r$ stand for linear, quadratic, cubic and remaining).\\
\\
The content of proposition is related with the structure of $\int w_{2m+4}^C(u) dx$.
\\
We substitute \eqref{geo} in \eqref{rec} to get $w_1^L(u)=u$ and for $n\geq 2$,
$$
w_n^L(u)={\bf i} P_{-}\partial_x w_{n-1}u.
$$
Therefore using that $P_{-}^2=P_{-}$, we obtain that
$$
w_n^L(u)={\bf i}^n P_{-}\partial_x^{n-1} u
=
\frac{1}{2}{\bf i}^n(1-{\bf i}H)\partial_x^{n-1} u
, \quad \forall\, n\geq 2\,.
$$
Next, we study the structure of $w_n^Q(u)$. 
We have $w_1^Q(u)=0$.
We substitute \eqref{geo} in \eqref{rec} and we observe that only $k=2$ contributes to give a quadratic expressions which yields 
\begin{equation}\label{eq:quadratic}
w_n^Q(u)= {\bf i} P_-\partial_x w_{n-1}^Q(u) + \sum_{\substack{j_1+j_2=n\\j_1,j_2\geq 1}}
c(j_1,j_2)w_{j_1}^L(u) w_{j_2}^L(u).
\end{equation}
We now turn to $w_n^C(u)$.
We have that $w_1^C(u)=w_2^C(u)=0$.
For $n\geq 3$, we again substitute \eqref{geo} in \eqref{rec} and we observe that only $k=2,3$ contribute to give a cubic expressions which yields 
\begin{multline}\label{invcub}
w_n^C(u)= {\bf i} P_-\partial_x w_{n-1}^C(u) + 
\sum_{\substack{j_1+j_2=n\\j_1,j_2\geq 1}}c(j_1,j_2)w_{j_1}^L(u) w_{j_2}^Q(u)
\\
+
\sum_{\substack{j_1+j_2+j_3=n\\j_1,j_2,j_3\geq 1}}c(j_1,j_2,j_3)w_{j_1}^L(u) w_{j_2}^L(u)w_{j_3}^L(u)\,.
\end{multline}
We now introduce a notation. We note by $\Lambda^0$ the identity map while for $n\geq 1$, 
the notation $\Lambda^n$ stays for an operator of the form $(c_1+c_2H)\partial_x^n$, where $c_1$, $c_2$ are constants.
Therefore, we may write $w_n^{L}(u)=\Lambda^{n-1}(u)$, $n\geq 1$.
We have the following lemma.
\begin{lem}\label{geo1}
For $n\geq 3$, the expression $\int w_n^C(u)dx$ can be written as a combinations of terms of type
\begin{equation}\label{zvezda}
\int \Lambda^{j_1}(u)\Lambda^{j_2}(u)\Lambda^{j_3}(u) dx
\end{equation}
where $j_1+j_2+j_3=n-3$.
\end{lem}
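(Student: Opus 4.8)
The plan is to turn the recursive identity \eqref{invcub} into a closed, non-recursive expression for $\int w_n^C(u)\,dx$ and then normalise each term that appears. The starting observation is that for any smooth periodic $v$ and any operator $\Lambda^m$ with $m\geq 1$ one has $\int \Lambda^m(v)\,dx=0$: writing $\Lambda^m v=(c_1+c_2H)\partial_x^m v$, the integral of $\partial_x^m v$ vanishes on the torus, and $H\partial_x^m v$ has vanishing zero-th Fourier coefficient. Since the first summand on the right of \eqref{invcub} is exactly $\Lambda^1\big(w_{n-1}^C(u)\big)$, it integrates to zero, so
\[
\int w_n^C(u)\,dx=\sum_{\substack{j_1+j_2=n\\ j_1,j_2\geq1}}c(j_1,j_2)\int w_{j_1}^L(u)\,w_{j_2}^Q(u)\,dx+\sum_{\substack{j_1+j_2+j_3=n\\ j_1,j_2,j_3\geq1}}c(j_1,j_2,j_3)\int w_{j_1}^L(u)\,w_{j_2}^L(u)\,w_{j_3}^L(u)\,dx.
\]
Using $w_j^L(u)=\Lambda^{j-1}(u)$, the second sum is already a combination of terms \eqref{zvezda} with $j_1+j_2+j_3=n-3$, so everything reduces to the first sum.

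For this I would first record the structure of $w_n^Q$: namely that $w_n^Q(u)$ is a finite linear combination of terms $\Lambda^c\big(\Lambda^a(u)\,\Lambda^b(u)\big)$ with $a+b+c=n-2$. This follows by induction on $n$ from \eqref{eq:quadratic}: the base case $w_2^Q(u)$ is a multiple of $u^2=\Lambda^0\big(\Lambda^0(u)\Lambda^0(u)\big)$; in the inductive step $\Lambda^1$ applied to a term $\Lambda^c\big(\Lambda^a(u)\Lambda^b(u)\big)$ equals $\Lambda^{c+1}\big(\Lambda^a(u)\Lambda^b(u)\big)$, while $w_{j_1}^L(u)w_{j_2}^L(u)=\Lambda^0\big(\Lambda^{j_1-1}(u)\Lambda^{j_2-1}(u)\big)$, and in both cases the total index is $n-2$. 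Here one uses repeatedly that $H$ commutes with $\partial_x$ and that $H^2=-\mathrm{Id}$ on zero-mean functions (in particular on any $\partial_x(\cdot)$), which is precisely what makes the composition of two operators $\Lambda^p,\Lambda^q$ an operator of type $\Lambda^{p+q}$.

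It then remains to normalise a generic term $\int\Lambda^{j_1-1}(u)\,\Lambda^c\big(\Lambda^a(u)\Lambda^b(u)\big)\,dx$ from the first sum, where $a+b+c=j_2-2$. Integrating by parts on the torus and using that $H$ is skew-adjoint one has $\int f\,\partial_x^c(g_1g_2)\,dx=(-1)^c\int(\partial_x^c f)\,g_1g_2\,dx$ and $\int f\,H\partial_x^c(g_1g_2)\,dx=(-1)^{c+1}\int(H\partial_x^c f)\,g_1g_2\,dx$; applying this with $f=\Lambda^{j_1-1}(u)$ moves the operator $\Lambda^c$ off the product $\Lambda^a(u)\Lambda^b(u)$ and onto $\Lambda^{j_1-1}(u)$, turning it into $\Lambda^{c+j_1-1}(u)$. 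Hence the term becomes a linear combination of expressions $\int\Lambda^{c+j_1-1}(u)\,\Lambda^a(u)\,\Lambda^b(u)\,dx$, which are of the form \eqref{zvezda} with index sum $(c+j_1-1)+a+b=(j_2-2)+(j_1-1)=n-3$, as required.

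No estimates are involved: the recursion is defused at once by the vanishing-integral remark, and the rest is purely algebraic. The only delicate point I anticipate is keeping all operators inside the class $\{\Lambda^m\}_{m\geq0}$ throughout the manipulations — composition of $\partial_x$ and $H$, integration by parts, and skew-adjointness of $H$ — which is legitimate exactly because we work with zero-mean functions, so that $H^2=-\mathrm{Id}$ holds on every function to which it is applied (each being either $u$ itself or a derivative). I expect this bookkeeping of derivative counts, rather than any analytic obstruction, to be the one thing to watch.
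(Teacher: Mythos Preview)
Your proof is correct. The paper takes a slightly different route to handle the terms $\int w_{j_1}^L(u)\,w_{j_2}^Q(u)\,dx$: rather than first establishing the closed structural form $w_n^Q=\sum \Lambda^c\big(\Lambda^a(u)\,\Lambda^b(u)\big)$ and then moving $\Lambda^c$ across in one integration by parts, the paper isolates a sub-lemma (Lemma~\ref{geo2}) asserting directly that $\int\Lambda^j(u)\,w_k^Q(u)\,dx$ is a combination of terms \eqref{zvezda}, proved by induction on $k$ via the recursion \eqref{eq:quadratic} and the adjoint identity $\int P_-(f)\,g\,dx=\int f\,P_+(g)\,dx$, so that one $P_-\partial_x$ is transferred from $w_{k-1}^Q$ onto $\Lambda^j(u)$ at each inductive step. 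Both arguments rest on the same algebraic facts---skew-adjointness of $H$, integration by parts, and stability of the class $\{\Lambda^m\}$ under composition and adjoints---so the two organizations are equivalent in content. Yours is a shade more direct because it decouples the structural analysis of $w^Q$ from the integration-by-parts step; the paper's inductive version, on the other hand, avoids having to write down the general form of $w_n^Q$ explicitly.
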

\begin{proof}
We first notice that $\int {\bf i} P_-\partial_x w_{n-1}^C(u) dx=0$, hence by integration of \eqref{invcub}
we deduce
\begin{multline}\label{idcub}
\int w_n^C(u) dx= 
\sum_{\substack{j_1+j_2=n\\j_1,j_2\geq 1}}c(j_1,j_2) \int w_{j_1}^L(u) w_{j_2}^Q(u) dx
\\
+
\sum_{\substack{j_1+j_2+j_3=n\\j_1,j_2,j_3\geq 1}}c(j_1,j_2,j_3) \int w_{j_1}^L(u) w_{j_2}^L(u)w_{j_3}^L(u) dx\,.
\end{multline}
Since $w_n^{L}(u)=\Lambda^{n-1}(u)$, $n\geq 1$
it follows that the second term on the right hand-side in \eqref{idcub} has 
the claimed structure.
Next we turn to the analysis of the terms $\int w_{j_1}^L(u) w_{j_2}^Q(u) dx$
which are involved on the structure of the first term in the right hand-side of \eqref{idcub}.
For that purpose we invoke the following lemma.
\begin{lem}\label{geo2}
Let $j\geq 0$ and $k\geq 2$. Then the expression
$\int \Lambda^j(u)w_{k}^{Q}(u) dx$
can be written as a combination of terms of type \eqref{zvezda} with $j_1+j_2+j_3=k+j-2$.
\end{lem}
\begin{proof}
We perform an induction on $k$. Since $w_2^Q(u)=c u^2$, we obtain that
$$
\int \Lambda^j(u)w_{2}^{Q}(u) dx= c\int \Lambda^j(u) \Lambda^{0} u \Lambda^{0} u dx\,.
$$
Thus the claim holds for $k=2$.
Next, for $k\geq 3$, we can write
$$
\int\Lambda^j(u)w_{k}^{Q}(u)dx=
\int\Lambda^j(u)
\Big(
{\bf i} P_{-}\partial_x w_{k-1}^Q(u) + \sum_{\substack{j_1+j_2=k\\j_1,j_2\geq 1}}
c(j_1,j_2)w_{j_1}^L(u) w_{j_2}^L(u)
\Big) dx
$$
where we have used \eqref{eq:quadratic}.
Using once again that $w_n^{L}(u)=\Lambda^{n-1}(u)$, $n\geq 1$, we obtain that for $j_1+j_2=k$, the expression
$$
\int\Lambda^j(u)w_{j_1}^L(u) w_{j_2}^L(u) dx
$$
has the claimed structure. It remains to analyze 
$$
\int\Lambda^j(u)P_{-}\partial_x w_{k-1}^Q(u) dx\,.
$$
If we denote by
$P_{+}=\frac{1}{2}(1+{\bf i}H)$ 
the projection on the positive frequencies, we have that
$
\int P_{-}(f)g dx=\int f P_{+}(g) dx
$
and therefore 
$$
\int\Lambda^j(u)P_{-}\partial_x w_{k-1}^Q(u) dx=
-\int P_{+}\partial_{x}\Lambda^j(u) w_{k-1}^Q(u) dx.
$$
The key observation is that $P_{+}\partial_{x}\Lambda^j(u)$ can be written as $\Lambda^{j+1}(u)$ and therefore we are in a position to apply 
the induction hypothesis.
This completes the proof of Lemma~\ref{geo2}.
\end{proof}
Using Lemma~\ref{geo2}, we obtain that for $j_1+j_2=n$, the expression $\int w_{j_1}^L(u) w_{j_2}^Q(u) dx$ can be written as a 
combination of terms of type \eqref{zvezda} with $j_1+j_2+j_3=n-3$.
This completes the proof of Lemma~\ref{geo1}.
\end{proof}
Let us now complete the proof of Proposition~\ref{prop:cubstruc}.
Thanks to Lemma~\ref{geo1} the only terms which can eventually appear in the second terms of the left hand-side of \eqref{even} are 
\begin{multline*}
I=\int u \partial_{x}^{m}(u) \partial_{x}^{m+1}u dx,\quad
II=\int u \partial_{x}^{m}(Hu) \partial_{x}^{m+1}u dx
\\
III=\int u \partial_{x}^{m}u \partial_{x}^{m+1}(Hu)dx ,\quad
IV=\int u \partial_{x}^{m}(Hu) \partial_{x}^{m+1}(Hu)dx.
\end{multline*}
We can write
$$
I=-\frac{1}{2}\int \partial_{x}u(\partial_{x}^m u)^2 dx,\quad
IV=-\frac{1}{2}\int \partial_{x}u(\partial_{x}^m(Hu))^2 dx
$$
and therefore $I$ and $IV$ can be transferred to the third term in the right hand-side of \eqref{even}.
Next, we can write 
$$
III=-II-\int \partial_x u \partial_{x}^{m}u \partial_{x}^{m}(Hu) dx.
$$
The expression $\int \partial_x (u) \partial_{x}^{m}(u) \partial_{x}^{m}(Hu)$ can also be transferred to the third term in the right hand-side of \eqref{even}.
Therefore the expression $II$ is the only one which remains in the second terms of the right hand-side of \eqref{even}. 
This completes the proof of Proposition~\ref{prop:cubstruc}.
\end{proof}
\section{Estimates for $\frac{d}{dt} E_{j/2}\Big (\pi_N \Phi_t^N(u)\Big )_{t=0}$}\label{sec.trun}
For any given $N$ we introduce
the Cauchy problems
\begin{equation}\label{BONN}
\left \{ \begin{array}{c}
\partial_t u_N + H \partial_x^2 u_N + \pi_N \big ((\pi_Nu_N)\partial_x (\pi_Nu_N)\big )=0\\
u(0)=u_0
\end{array}
\right .
\end{equation}
The corresponding unique global solutions (that exist provided that
$u_0\in H^s$ for some $s\geq 0$)
are denoted by
$$u_N(t,.)=\Phi_t^N(u_0)
$$
(see section \ref{detsec} for more details on the truncated problems defined above).
We shall need the following functions (where $k=2(m+1)$ is an even integer as in Theorem~\ref{lem4}):
\begin{equation}\label{eq:G}G_N(u)= \frac{d}{dt} \Big(E_{m+1}(\pi_N(\Phi^t_Nu))\Big )_{t=0}\end{equation}
\begin{equation}\label{eq:H}H_N(u)= \frac{d}{dt} \Big(E_{m+1/2} (\pi_N(\Phi^t_N u))\Big)_{t=0}\end{equation}
\begin{equation}\label{eq:L}L_N^{j_0}(u)= \frac{d}{dt} \Big(E_{j_0/2} 
(\pi_N(\Phi^t_N u))\Big)_{t=0}, j_0=0,...,2m
\end{equation}
defined on the probability space $(H^s, d\mu_{m+1})$ for $s<m+1/2$.
We have the following key property.
\begin{prop}\label{ens}
Let $q\in [1,\infty)$ and $m\geq 2$, then we have:
$$
\lim_{N\rightarrow \infty} 
\Big(\|G_N(u)\|_{L^q(d\mu_{m+1})}+\|H_N(u)\|_{L^q(d\mu_{m+1})}+
\sum_{j_0=0}^{2m}\|L_N^{j_0}(u)\|_{L^q(d\mu_{m+1})}\Big)=0.
$$
\end{prop}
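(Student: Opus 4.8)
The plan is to compute $\frac{d}{dt}E_{j/2}(\pi_N\Phi_t^N u)|_{t=0}$ explicitly and show that, after using the conservation laws of the \emph{full} equation, what remains is a commutator-type error supported on high frequencies which vanishes in $L^q(d\mu_{m+1})$ as $N\to\infty$. First I would note that $E_{j/2}$ is conserved along the genuine flow $\Phi_t$, so the only source of the time derivative is the mismatch between $\Phi_t^N$ and $\Phi_t$: replacing $u\partial_x u$ by $\pi_N((\pi_N u)\partial_x(\pi_N u))$ introduces a defect. Writing $v=\pi_N u$, one gets
\begin{equation*}
\frac{d}{dt}E_{j/2}(\pi_N\Phi_t^N u)\Big|_{t=0}
=\langle E_{j/2}'(v),\,\pi_N\big(v\partial_x v - \pi_N(v\partial_x v)\big)\rangle
+\langle E_{j/2}'(v)-E_{j/2}'(v)\big|_{\text{full}},\dots\rangle,
\end{equation*}
and a careful bookkeeping (using that $E_{j/2}'$ is a sum of terms from the classes $\mathcal P_n(u)$ described in Section~\ref{matconla}) shows the defect is a finite sum of integrals of the form $\int (\pi_{>N}\Lambda^{a_1}v)\,\Lambda^{a_2}v\cdots\Lambda^{a_\ell}v\,dx$ where at least one factor carries the high-frequency projector $\pi_{>N}$, with the total number of derivatives controlled by the order of the conservation law. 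The point is that such an integral is identically zero unless the other (low-frequency, $|n|\le N$) factors conspire to produce a frequency $\gtrsim N$, which forces at least one \emph{more} of the low-frequency factors to carry frequency $\gtrsim N/\ell$.

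Next I would estimate these multilinear high-frequency expressions in $L^q_\omega$ when $u=\varphi_{m+1}(\omega,\cdot)$ is the Gaussian random field from \eqref{randomized}. The key gain is that each factor forced to live at frequency $\gtrsim N$ contributes, in $L^2_\omega$, a tail of the series $\sum_{|n|\gtrsim N}|n|^{2a}/|n|^{2(m+1)}$, which is summable (hence $o(1)$) precisely because $m\geq 2$ keeps the relevant exponents below the divergence threshold, even after accounting for the derivative loss $a\le m+1$ (or $m+1/2$, or $\le m$) inherent in $E_{j/2}'$. Using Wick/hypercontractivity (the equivalence of $L^q_\omega$ and $L^2_\omega$ norms on a fixed Wiener chaos, as in \cite{TV}), the $L^q(d\mu_{m+1})$ bound reduces to the $L^2$ computation, and one concludes $\|G_N\|_{L^q}+\|H_N\|_{L^q}+\sum_{j_0}\|L_N^{j_0}\|_{L^q}\to 0$. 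For the lower-order energies $E_{j_0/2}$ with $j_0\le 2m$ the argument is strictly easier since fewer derivatives are involved; the borderline cases are $G_N$ (top energy $E_{m+1}$) and $H_N$ (energy $E_{m+1/2}$), which is why the hypothesis is $m\ge 2$ rather than $m\ge 1$.

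The main obstacle, I expect, is the precise combinatorial identification of the defect: one must show that \emph{every} term produced by differentiating $E_{j/2}$ along the truncated flow — including the cubic term isolated in Proposition~\ref{prop:cubstruc} and the genuinely higher-order pieces $w^r$ — either cancels against the conservation law of \eqref{bo} or else acquires the crucial high-frequency projector on a sufficient number of factors. This requires tracking the algebra of the $\mathcal P_n(u)$ classes through the multiplication by the nonlinearity defect $v\partial_x v-\pi_N(v\partial_x v)$ and integrating by parts to move derivatives off the projected factors, exactly the kind of structural analysis carried out in Section~\ref{matconla}. Once the defect is written in the normal form above, the probabilistic estimate is routine; but getting the normal form with the right count of high-frequency factors and the right derivative budget is the delicate point, and it is where the restriction to even $k$ (so that the top conservation law has the clean quadratic-plus-cubic-plus-remainder structure of \eqref{even}) is exploited.
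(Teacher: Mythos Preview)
Your overall plan is right, and for the lower–order energies $L_N^{j_0}$ and $H_N$ the pigeonhole-plus-derivative-count you describe does essentially reproduce the paper's argument. The genuine gap is in the treatment of $G_N$, specifically the contribution of the top cubic term $c\int u(H\partial_x^m u)\partial_x^{m+1}u$ isolated by Proposition~\ref{prop:cubstruc}. Your displayed defect formula is not quite right (note $\pi_N(v\partial_x v-\pi_N(v\partial_x v))=0$ and $\pi_{>N}\Lambda^{a}v=0$ since $v=\pi_N u$); the actual defect is a sum of $\int p^*_N(\pi_N u)\,dx$ in the paper's notation, where $\pi_{>N}$ sits on a \emph{product} $\partial_x^{a}(v\partial_x v)$. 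After Leibniz, one of the two factors inside $\pi_{>N}$ can be $\partial_x^{m+1}u$ (Fourier weight $\sim 1$), and the ``other'' factors outside are $u$ and $\partial_x^{m+1}u$. Your pigeonhole says one factor in each pair lies at frequency $\gtrsim N$, but nothing prevents both high-frequency hits from landing on the weightless $\partial_x^{m+1}u$ factors; in that case the Minkowski (or $L^2_\omega$) bound is $\sum_{j_1,j_2}|j_1|^{-(m+1)}|j_2|^{-(m+1)}\cdot\#\{j_3\}\sim N$, which diverges. Hypercontractivity does not help, since the $L^2$ diagonal already blows up.

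What the paper actually uses here is Lemma~\ref{intpar1} (proved as Lemma~9.1 in \cite{TV}): a specific algebraic identity, exploiting the $u=u^++u^-$ splitting induced by the Hilbert transform, which rewrites
\[
\int u\,H\partial_x^m\pi_{>N}(u\partial_x u)\,\partial_x^{m+1}u
\quad\text{and}\quad
\int u\,(H\partial_x^m u)\,\partial_x^{m+1}\pi_{>N}(u\partial_x u)
\]
as sums of terms $\int\pi_{>N}(\partial_x^{j}u^{\pm}\partial_x^{m+1-j}u^{\pm})\,\pi_{>N}(u^{\mp}\partial_x^{m+1}u^{\mp})$ with $1\le j\le m$. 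The point is that after this rewriting the $\pi_{>N}$ constraint couples a pair of indices carrying weights $|j_3|^{-m}|j_4|^{-1}$ (both decaying), so Lemma~\ref{prod} applies and gives $O(\ln N/N)$. This cancellation is not a generic integration by parts; it is the structural reason the cubic term was singled out in Proposition~\ref{prop:cubstruc}, and without it your scheme does not close for $m\ge 2$.
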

The main tools involved in the proof of Proposition \ref{ens} are in \cite{TV}. More precisely we recall below 
Lemma~9.1 and Lemma~10.1 in \cite{TV}.
\begin{lem}\label{intpar1}
Let $u(x)=\sum_{j=-N}^N c_j e^{{\bf i}jx}$ with $c_0=0$, 
and $u^+(x)=\sum_{j=1}^N c_j e^{{\bf i}jx}$, $u^-(x)=\sum_{j=-N}^{-1} c_j e^{{\bf i}jx}$. 
Then the following identities occur:
\begin{equation}\label{mire1}
\int u (H\partial_x^m \pi_{>N} 
(u\partial_x u)) \partial_x^{m+1} u dx
\end{equation}
$$=\sum_{j=1}^m a_j [\int \pi_{>N} (\partial^j_x u^+
\partial^{m-j+1}_x u^+ )\pi
_{>N} (u^- \partial^{m+1}_xu^-)$$
$$- \pi_{>N} (\partial_x^j 
u^-\partial_x^{m-j+1} u^-)\pi_{>N} 
(u^+ \partial_x^{m+1}u^+) dx]$$
for suitable coefficient $a_j\in \C$;
\begin{equation}\label{mire2}
\int u (H\partial_x^m u) \partial_x^{m+1} \pi_{>N} (u \partial_x u) dx
\end{equation}$$=\sum_{j=1}^m b_j [\int \pi_{>N} 
(\partial_x^j u^+\partial_x^{m-j+1} u^+ )\pi_{>N} 
(u^- \partial_x^{m+1}u^-) $$
$$- \pi_{>N} (\partial^j_x u^-\partial^{m-j+1}_x u^-)\pi_{>N} 
(u^+ \partial^{m+1}_xu^+) dx]$$ for suitable coefficient $b_j\in \C$.
\end{lem}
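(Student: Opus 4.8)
The plan is to prove Lemma~\ref{intpar1} — the two identities \eqref{mire1} and \eqref{mire2} — by exploiting the fact that the trilinear expressions on the left-hand sides involve the high-frequency truncation $\pi_{>N}(u\partial_x u)$ applied to a polynomial $u$ of degree at most $N$. The starting observation is that if $u = \sum_{|j|\le N} c_j e^{{\bf i}jx}$ with $c_0 = 0$, then $u\partial_x u = \frac12 \partial_x(u^2)$ has Fourier support in $\{|j|\le 2N\}$, and $\pi_{>N}(u\partial_x u)$ keeps only the modes with $N < |j| \le 2N$. A mode with frequency $j$ satisfying $N < j \le 2N$ can only be produced from a product $c_p c_q e^{{\bf i}(p+q)x}$ with $p+q = j$ and $|p|,|q|\le N$; since $j>N$ this forces \emph{both} $p>0$ and $q>0$ (if one of them were $\le 0$ the other would have to exceed $N$). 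Hence $\pi_{>N}(u\partial_x u)$ has Fourier support in $\{N<j\le 2N\}$ and equals $\pi_{>N}(u^+\partial_x u^+)$; symmetrically the negative-frequency part equals $\pi_{>N}(u^-\partial_x u^-)$. This is the key algebraic reduction.

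Next I would write out \eqref{mire1}. Expand the Hilbert transform $H$ acting on $\pi_{>N}(u\partial_x u)$: on a function supported in $\{N<j\le 2N\}\cup\{-2N\le j<-N\}$, $H$ acts as $-{\bf i}$ on the positive part and $+{\bf i}$ on the negative part, so $H\partial_x^m \pi_{>N}(u\partial_x u)$ splits into two pieces, one supported in high positive frequencies and one in high negative frequencies, with explicit factors of powers of ${\bf i}$. Now use the orthogonality of $\{e^{{\bf i}jx}\}$ under $\int dx$: the integral $\int u \cdot (\text{high-positive-frequency factor}) \cdot \partial_x^{m+1} u\, dx$ forces the remaining two factors $u$ and $\partial_x^{m+1} u$ to carry frequencies summing to a large \emph{negative} number (since the middle factor has large positive frequency and everything must sum to zero), which — by the same argument as above applied in reverse — means only $u^- \partial_x^{m+1} u^-$ survives there, and dually $u^+\partial_x^{m+1}u^+$ survives against the high-negative-frequency piece. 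Applying $\pi_{>N}$ inside these products is then automatic because the products already live in high frequencies. Re-indexing the first derivative ($\partial_x^j$ coming from $\partial_x^m$ distributed by the Leibniz rule after writing $\partial_x^m(u\partial_x u) = \partial_x^m(\frac12\partial_x u^2)$, or more directly from $\partial_x^m(u^+\partial_x u^+)$) produces the sum $\sum_{j=1}^m a_j[\cdots]$ with the stated antisymmetric structure, the relative minus sign being exactly the sign difference between how $H$ acts on positive versus negative frequencies. The identity \eqref{mire2} is proved in the same way, the only difference being which of the three factors carries the truncated nonlinearity $\partial_x^{m+1}\pi_{>N}(u\partial_x u)$; the bookkeeping of the constants $b_j$ versus $a_j$ differs but the mechanism is identical.

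The step I expect to be the most delicate is the precise tracking of which products genuinely survive the frequency-support constraints and producing the correct combinatorial form of the coefficients $a_j, b_j$ — in particular making sure that the "diagonal" contributions (where, e.g., one factor of $u$ could in principle pair with a low-frequency part) really do vanish, which relies crucially on the hypothesis $c_0 = 0$ (no zero mode) and on the degree bound $\deg u \le N$ so that $\{|p|,|q|\le N, p+q>N\}\Rightarrow p,q>0$ with no boundary exceptions. Once the support analysis is pinned down, everything else is a finite manipulation with Fourier series and integration by parts, and since the lemma is quoted verbatim from \cite[Lemmas~9.1 and~10.1]{TV}, I would in fact simply cite those and reproduce the short support argument above as a reminder rather than redo the full computation.
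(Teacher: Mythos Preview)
Your proposal is correct and matches the paper: the paper does not prove this lemma but simply recalls it as Lemmas~9.1 and~10.1 of \cite{TV}, exactly as you suggest doing at the end. Your frequency-support sketch is a faithful outline of the argument there; the one point worth making explicit is that the Leibniz expansion of $\partial_x^m(u^\pm\partial_x u^\pm)$ naturally runs over $j=0,\ldots,m$, and the stated range $j=1,\ldots,m$ arises precisely because the $j=0$ contributions from the $u^+$ and $u^-$ pieces are identical up to the opposite signs inherited from $H$ and therefore cancel.
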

\begin{lem}\label{prod}
The following estimate occurs :
\begin{equation}\label{double}
\sum_{\substack{|n+m|>N\\0< |n|, |m|\leq N}} 
\frac 1{n^2}\frac 1{|m|}=O\Big (\frac{\ln N}{N}\Big )
\hbox{ as } N\rightarrow \infty.
\end{equation}
\end{lem}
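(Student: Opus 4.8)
The plan is to perform the sum over $m$ first, for each fixed $n$, exploiting that the constraint $|n+m|>N$ together with $|m|\le N$ pins $m$ to an interval of length $|n|$ sitting against the endpoints of $[-N,N]$. By the symmetry $(n,m)\mapsto(-n,-m)$, which preserves both the summand $\frac1{n^2|m|}$ and the condition $|n+m|>N$, it is enough to treat $n>0$ and double. For such $n\in\{1,\dots,N\}$: if $0<|m|\le N$ and $|n+m|>N$ then $m<-N-n$ is impossible (as $-N-n<-N$), so $m>N-n$; hence $m$ runs over the $n$ positive integers $N-n+1,\dots,N$, and
$$
S_n:=\sum_{\substack{0<|m|\le N\\|n+m|>N}}\frac1{|m|}=\sum_{m=N-n+1}^{N}\frac1m .
$$

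Next I would split the outer sum at $n=N/2$. For $n\le N/2$ every $m$ occurring in $S_n$ satisfies $m>N-n\ge N/2$, so $S_n\le 2n/N$ and $\sum_{n\le N/2}S_n/n^2\le (2/N)\sum_{n\le N}1/n=O(\ln N/N)$. For $N/2<n\le N$ I use only the crude bound $S_n\le\sum_{m=1}^N 1/m\le 1+\ln N$ together with $1/n^2\le 4/N^2$, so that $\sum_{N/2<n\le N}S_n/n^2\le(1+\ln N)\,(4/N^2)\,N=O(\ln N/N)$. Adding the two ranges and doubling gives \eqref{double}.

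There is no genuine obstacle here; the only point to be careful about is not to be wasteful. Bounding $S_n$ by the full harmonic sum $\asymp\ln N$ for \emph{all} $n$ would only give $O(\ln N)$ (since $\sum_n n^{-2}=O(1)$); the extra factor $N^{-1}$ comes precisely from the observation that for the bulk of the range, $|n|\le N/2$, the surviving values of $m$ are all comparable to $N$, so that $S_n$ is as small as $|n|/N$. The complementary range $|n|\sim N$, where the surviving $m$ can be small and $S_n$ is genuinely of order $\ln N$, must be isolated, but there the prefactor $n^{-2}=O(N^{-2})$ already absorbs it.
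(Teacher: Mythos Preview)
Your proof is correct. The symmetry reduction to $n>0$, the identification of the admissible $m$'s as $\{N-n+1,\dots,N\}$, and the dyadic split at $n\sim N/2$ are all accurate, and the arithmetic in each range gives $O(\ln N/N)$ as claimed.

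As for comparison: the paper does not actually prove this lemma here; it merely recalls it as Lemma~10.1 from \cite{TV}. Your argument is a clean self-contained proof of what the paper imports as a black box, so there is nothing to compare on the level of method. Your closing remark about why the naive bound $S_n\lesssim\ln N$ would fail by a factor of $N$ is a useful sanity check and worth keeping.
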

We shall also need a concrete representation of the function
$$\frac{d}{dt} E(\pi_N (u_N(t)))_{|t=0}$$
where $E$ is one of the energies $E_{j/2}$ which are preserved along the flow of \eqref{bo}, and $u_N(t,x)$ are solutions of \eqref{BONN}.\\
In the sequel we shall use the notations introduced in Section~\ref{matconla}. 
Given any $p(u)\in \cup_{n=2}^\infty {\mathcal P}_n(u)$ and any $N\in \N$ then we can introduce
$p_N^*(u)$ as follows (see Section~8 in \cite{TV} for more details). 
Let $p(u) \hbox{ be such that }$ $$\tilde p(u)=\prod_{i=1}^n 
\partial_{x}^{\alpha_i} u$$ 
(see Section~\ref{matconla} for the definition of $\tilde p(u)$) for suitable
$0\leq \alpha_1\leq ... \leq \alpha_n$ and $\alpha_i\in \N$.
First we define $p_{i,N}^*(u)$ as the function 
obtained by $p(u)$ replacing
$\partial_x^{\alpha_i}(u)$ by $\partial_x^{\alpha_i}
(\pi_{>N} (u\partial_x u))$, i.e.
\begin{equation} 
p_{i,N}^*(u) = p(u)_{|\partial_x^{\alpha_i} u= 
\partial_x^{\alpha_i} (\pi_{>N} (u\partial_x u))}, 
\hbox{ } \forall i=1,..,n.
\end{equation}
We now define
$p_N^*(u)$ as follows:
$$p^*_N(u)=\sum_{i=1}^n p_{i,N}^*(u).$$
The following propositions follow by section 8 in \cite{TV}.
\begin{prop}\label{defG}
For every fixed integer $n\in \N$ and for every $N\in \N $ we have:
\begin{align*}
\frac d{dt}E_{n}(\pi_N (u_N(t)))= & 
\sum_{\substack{p(u)\in {\mathcal P}_3(u) s.t. \\ 
\tilde p(u)=u\partial_x^{n-1} u \partial_x^{n}u}}
c_{2n}(p) \int p_N^*(\pi_N (u_N(t)))dx\\
& +
\sum_{\substack{p(u)\in {\mathcal P}_{j}(u) s.t. j=3,..., 2n+2\\ 
\|p(u)\|= 2n-j+2\\ |p(u)|\leq n-1}} c_{2n}(p) \int p_N^*(\pi_N (u_N(t)))dx
\end{align*}
where
$u_N(t,x)$ solves \eqref{BONN} and $c_{2n}(p)$
are the same constants that appear in \eqref{even}.
\end{prop}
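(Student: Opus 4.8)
\textbf{Proof plan for Proposition~\ref{defG}.}
The statement asserts a concrete formula for the time derivative at general time $t$ (in fact the proof of Proposition~\ref{ens} will specialize to $t=0$) of the energy $E_n$ evaluated along the truncated flow $\Phi^N_t$. The plan is to differentiate $E_n(\pi_N u_N(t))$ directly, using the structure formula \eqref{even} for $E_n$ together with the truncated equation \eqref{BONN}. First I would note that $E_n$ is a finite sum of integrals $\int p(u)\,dx$ with $p(u)\in\mathcal{P}_j(u)$, $j=3,\dots,2n+2$, plus the quadratic leading term $\|u\|_{\dot H^n}^2$; since $\pi_N u_N(t)$ is a trigonometric polynomial with finitely many Fourier modes, all these integrals are differentiable in $t$ and the differentiation can be carried out term by term.

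The core computation is the chain rule. Writing $v=\pi_N u_N(t)$, each summand $\int p(v)\,dx$ has a derivative $\frac{d}{dt}\int p(v)\,dx = \int Dp(v)[\partial_t v]\,dx$, where $Dp(v)[\cdot]$ is the (multilinear) Fréchet derivative; concretely, since $\tilde p(v)=\prod_{i=1}^n\partial_x^{\alpha_i}v$, Leibniz gives a sum over $i$ of the expression obtained by replacing the $i$-th factor $\partial_x^{\alpha_i}v$ by $\partial_x^{\alpha_i}(\partial_t v)$. From \eqref{BONN}, $\partial_t v = \pi_N\partial_t u_N = -H\partial_x^2 v - \pi_N\big((\pi_N u_N)\partial_x(\pi_N u_N)\big) = -H\partial_x^2 v - \pi_N(v\partial_x v)$ (using $\pi_N^2=\pi_N$ and $\pi_N v = v$). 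Substituting this, each $\frac{d}{dt}\int p(v)\,dx$ splits into a contribution from $-H\partial_x^2 v$ and a contribution from $-\pi_N(v\partial_x v)$. The crucial point is that $\pi_N(v\partial_x v) = v\partial_x v - \pi_{>N}(v\partial_x v)$; and since $E_n$ is an \emph{exact} conservation law for the untruncated equation \eqref{bo}, the sum over all terms of the contributions coming from replacing a factor by $-H\partial_x^2 v$ and by $-v\partial_x v$ cancels identically (this is precisely the statement $\frac{d}{dt}E_n(u)=0$ along \eqref{bo}, applied formally to the polynomial $v$). What survives is exactly the sum over all $p$ and all factor-positions $i$ of the term where $\partial_x^{\alpha_i}v$ is replaced by $\partial_x^{\alpha_i}(\pi_{>N}(v\partial_x v))$, which by definition is $\sum_i \int p_{i,N}^*(v)\,dx = \int p_N^*(v)\,dx$. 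Collecting the two groups of terms in \eqref{even} (the cubic term with $\tilde p = u\partial_x^{n-1}u\,\partial_x^n u$, and the higher/lower-order terms with $\|p\|=2n-j+2$, $|p|\le n-1$) then yields the claimed identity with the same constants $c_{2n}(p)$.

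The main obstacle is bookkeeping rather than analysis: one must make precise the cancellation "the derivative of an exact conservation law along the true flow vanishes" at the level of the polynomial algebra $\bigcup_n\mathcal{P}_n(u)$, i.e. verify that substituting $\partial_t u = -H\partial_x^2 u - u\partial_x u$ into $\frac{d}{dt}E_n$ gives zero as a \emph{polynomial identity in the Fourier coefficients of} $u$ (so that it can be legitimately applied with $u$ replaced by the trigonometric polynomial $v$), and then track that the only mismatch between \eqref{BONN} and \eqref{bo} is the single modification $v\partial_x v \mapsto \pi_{>N}(v\partial_x v)$ inserted, by Leibniz, in each factor separately. This is where the reference to Section~8 of \cite{TV} does the heavy lifting; I would invoke it for the precise algebraic identities and restrict the exposition here to the chain-rule computation and the identification of the surviving terms with $p_N^*$. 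One should also check the leading quadratic term $\|v\|_{\dot H^n}^2$: its derivative is $2\,(\partial_x^n v, \partial_x^n\partial_t v)_{L^2}$, and the part from $-H\partial_x^2 v$ vanishes by skew-adjointness of $H\partial_x^2$, while the part from $-v\partial_x v$ combines with the cubic term of \eqref{even} in exactly the way the conservation law dictates, again leaving only the $\pi_{>N}$-defect; this is consistent with the first sum on the right-hand side of the asserted formula being indexed by $p$ with $\tilde p = u\partial_x^{n-1}u\,\partial_x^n u$.
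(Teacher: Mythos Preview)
Your approach is correct and is precisely the argument the paper defers to Section~8 of \cite{TV}: differentiate $E_n(v)$ along $v=\pi_N u_N$ by the chain rule, write $\partial_t v=-H\partial_x^2 v - v\partial_x v + \pi_{>N}(v\partial_x v)$, use that the contribution of $-H\partial_x^2 v - v\partial_x v$ vanishes because $E_n$ is an exact conservation law (a polynomial identity in the Fourier coefficients, hence valid for the trigonometric polynomial $v$), and identify the surviving $\pi_{>N}$-defect with $\sum_p c_{2n}(p)\int p_N^*(v)\,dx$.

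One small correction to your final paragraph: the quadratic leading term $\|v\|_{\dot H^n}^2$ does \emph{not} leave a $\pi_{>N}$-defect that is absorbed into the first (cubic) sum of the formula. Rather, its defect
\[
2\int \partial_x^n v\,\partial_x^n\big(\pi_{>N}(v\partial_x v)\big)\,dx
\]
vanishes outright, since $v=\pi_N v$ has Fourier support in $\{|k|\le N\}$ while $\pi_{>N}(v\partial_x v)$ has Fourier support in $\{|k|>N\}$. This is why the asserted identity contains only the $p_N^*$ contributions from the $R_{k/2}$ part of \eqref{even} (the two displayed sums), with no additional term coming from $\|u\|_{\dot H^n}^2$.
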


\begin{prop}\label{defGodd}
For every integer $n\in \N$ and for every $N\in \N $ we have:
\begin{equation*}
\frac d{dt}E_{n+1/2}(\pi_N(u_N(t)))=  
\sum_{\substack{p(u)\in {\mathcal P}_j(u) s.t.j=3,...,2n+3\\ 
\|p(u)\|=2n-j+3\\|p(u)|\leq n}} c_{2n+1}(p) \int p_N^*(\pi_N (u_N(t)))dx\end{equation*}
where
$u_N(t,x)$ solves \eqref{BONN} and $c_{2n+1}(p)$
are the same constants that appear in \eqref{odd}.
\end{prop}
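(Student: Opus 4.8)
The plan is to differentiate the explicit expression \eqref{odd} for $E_{n+1/2}$ along the truncated flow, and to use the exact conservation of $E_{n+1/2}$ under the genuine Benjamin-Ono flow \eqref{bo} so as to cancel everything coming from the full vector field, leaving a single explicit correction term. First I would set $v(t)=\pi_N(u_N(t))$, where $u_N$ solves \eqref{BONN}. Applying $\pi_N$ to \eqref{BONN} and using that $\pi_N$ commutes with the Fourier multiplier $H\partial_x^2$, that $\pi_N^2=\pi_N$, and that $\pi_N u_N=v$, one finds that $v$ solves the finite-dimensional system
$$
\partial_t v=-H\partial_x^2 v-\pi_N(v\,\partial_x v).
$$
At each time $v$ is a trigonometric polynomial of degree at most $N$, so $E_{n+1/2}(v)$ is an ordinary polynomial in its finitely many Fourier coefficients and $t\mapsto v(t)$ is smooth; hence $\frac{d}{dt}E_{n+1/2}(v)=\langle E_{n+1/2}'(v),\partial_t v\rangle$ by the chain rule, the bracket denoting the real $L^2$ scalar product.

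The key reduction is to split the truncated vector field as $\partial_t v=X(v)+\pi_{>N}(v\,\partial_x v)$, where $X(v)=-H\partial_x^2 v-v\,\partial_x v$ is the full Benjamin-Ono vector field. Since $E_{n+1/2}$ is a conservation law of \eqref{bo}, for every smooth $w$ one has $\frac{d}{dt}\big|_{t=0}E_{n+1/2}(\Phi_t w)=\langle E_{n+1/2}'(w),X(w)\rangle=0$; taking $w=v$ gives $\langle E_{n+1/2}'(v),X(v)\rangle=0$. Therefore
$$
\frac{d}{dt}E_{n+1/2}(v)=\langle E_{n+1/2}'(v),\,\pi_{>N}(v\,\partial_x v)\rangle,
$$
i.e. the whole time derivative equals the directional derivative of $E_{n+1/2}$ in the single direction $h:=\pi_{>N}(v\,\partial_x v)$.

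It then remains to compute this directional derivative term by term using \eqref{odd}. For the leading term $\|v\|_{\dot{H}^{n+1/2}}^2$ the contribution is $2\langle v,h\rangle_{\dot{H}^{n+1/2}}$; since $v$ is supported on frequencies $|n|\le N$ whereas $h=\pi_{>N}(v\,\partial_x v)$ is supported on frequencies $|n|>N$, the two are orthogonal and this contribution vanishes, which is precisely why no quadratic term appears on the right-hand side. For each admissible density $p$ with $\tilde p(v)=\prod_i\partial_x^{\alpha_i}v$, the product rule differentiates one factor $H^{i_l}\partial_x^{\alpha_i}v$ at a time and replaces it by $H^{i_l}\partial_x^{\alpha_i}h$; summing over the factor index reproduces verbatim the definition $p_N^*(v)=\sum_i p_{i,N}^*(v)$, so that $\frac{d}{ds}\big|_{s=0}\int p(v+sh)\,dx=\int p_N^*(v)\,dx$. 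Multiplying by $c_{2n+1}(p)$ and summing over the $p$ in \eqref{odd} yields the asserted identity, with the same constants.

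The main obstacle is the bookkeeping in this last step: one must verify that the product-rule expansion matches the definition of $p_N^*$ exactly, tracking the positions of the Hilbert transforms (which stay in front of the differentiated factor) and handling densities with repeated factors $\alpha_i=\alpha_{i'}$, where the sum over factor positions must produce the correct combinatorial multiplicity. The conceptual heart, however, is the conservation identity $\langle E_{n+1/2}'(v),X(v)\rangle=0$, which lets one trade the otherwise intractable time derivative for the single explicit term $\langle E_{n+1/2}'(v),\pi_{>N}(v\,\partial_x v)\rangle$; this parallels the even case recorded in Proposition~\ref{defG}.
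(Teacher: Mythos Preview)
Your argument is correct and is precisely the approach the paper has in mind: it does not prove Proposition~\ref{defGodd} here but refers to Section~8 of \cite{TV}, where the computation proceeds exactly as you outline---differentiate along the truncated flow, split $\partial_t v=X(v)+\pi_{>N}(v\partial_x v)$, use the conservation identity $\langle E_{n+1/2}'(v),X(v)\rangle=0$, note the vanishing of the quadratic contribution by Fourier-support orthogonality, and identify the directional derivative of each $\int p(v)\,dx$ with $\int p_N^*(v)\,dx$ via the product rule. Your caveat about repeated exponents is handled automatically by the definition $p_N^*(u)=\sum_{i=1}^n p_{i,N}^*(u)$, which sums over \emph{positions} rather than over distinct values of~$\alpha_i$.
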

\begin{proof}[Proof of Proposition~\ref{ens}]
We first prove that 
$\lim_{N\rightarrow \infty} 
\|G_N(u)\|_{L^q(d\mu_{m+1})}=0$.
In fact by combining Proposition \ref{prop:cubstruc} with Proposition \ref{defG} 
it is sufficient to prove
\begin{equation}\label{eq:sing}
\lim_{N\rightarrow \infty} \Big \|\int p^*_N(\pi_N u) dx\Big \|_{L^q(d\mu_{m+1})}=0
\end{equation}
$$(\hbox{ i.e. } \lim_{N\rightarrow \infty} \Big \|\int p^*_N(\pi_N \varphi(\omega)) dx\Big \|_{L^q_\omega}=0
\hbox{ where $\varphi(\omega)=\varphi_{m+1}(\omega)$ is defined in \eqref{randomized}})$$
with:
\begin{equation}\label{eqin1}
p(u)=u (H\partial_x^m u) \partial_x^{m+1}u
\end{equation}
and
\begin{equation}\label{eqin2}
p(u)\in \bigcup_{j=3}^{2m+4} {\mathcal P}_j(u) 
\end{equation}
with
$\|p(u)\|= 2m-j+4$ and $|p(u)|\leq m$.
\\
First we treat the case \eqref{eqin1}.
In this case we can write explicitly
\begin{multline*}
p_N^*(u)= \pi_{>N} (u \partial_x u)(\partial_x^m H u) \partial_x^{m+1}u
\\
+ 
u \partial_x^m (\pi_{>N} H (u\partial_x u)) \partial_x^{m+1}u 
+ u(\partial_x^m H u) \partial_x^{m+1}(\pi_{>N} (u\partial_x u))).
\end{multline*}
Hence we get
$$\int p_N^*(\pi_N(\varphi(\omega)))dx= 
I_N(\omega) + II_N(\omega)$$
where
\begin{equation}\label{INNN}
I_N(\omega)= \int \pi_{>N} 
(\varphi_N(\omega) \partial_x (\varphi_N(\omega)))
\partial_x^m (H \varphi_N(\omega))
\partial_x^{m+1}\varphi_N(\omega) dx
\end{equation}
\begin{equation}\label{INN9}II_N(\omega)= \int 
\varphi_N(\omega) (\partial_x^m H \pi_{>N} (\varphi_N(\omega)
(\partial_x \varphi_N(\omega))) 
\partial_x^{m+1}\varphi_N(\omega)) 
\end{equation}$$+ \varphi_N(\omega)(H \partial_x^m 
\varphi_N(\omega))\partial_x^{m+1}(\pi_{>N} (\varphi_N(\omega)
\partial_x(\varphi_N(\omega)))dx$$
and 
$$\varphi_N(\omega)=\sum_{\substack{n\in \Z \setminus \{0\} \\ -N\leq n\leq N}} 
\frac{\varphi_n(\omega)}{|n|^{m+1}}e^{{\bf i}nx}.$$
In order to estimate
$I_N(\omega)$ notice that 
$$I_N(\omega)=\int \pi_{>N} 
(\varphi_N(\omega)\partial_x \varphi_N(\omega))
(\partial_x^m H \varphi_N(\omega)) \partial_x^{m+1}
\varphi_N(\omega) dx
$$$$= \sum_{\substack{0<|j_1|, |j_2|, |j_3|, |j_4|\leq N\\
|j_1+j_2|>N\\j_1+j_2+j_3+j_4=0}} 
c_{j_1, j_2, j_3, j_4}\frac{\varphi_{j_1}(\omega)}{|j_1|^{m+1}}
\frac{\varphi_{j_2}(\omega)}{|j_2|^m}
\frac{\varphi_{j_3}(\omega)}{|j_3|}\varphi_{j_4}(\omega)$$
where $|c_{j_1, j_2, j_3, j_4}|=1$
and hence by the Minkowski inequality
$$\|I_N(\omega)\|_{L^q_\omega}\leq C 
\sum_{\substack{0<|j_1|, |j_2|, |j_3|, |j_4|\leq N\\|j_1+j_2|>N\\j_1+j_2+j_3+j_4=0}}
\frac 1{|j_1|^{m+1}|j_2|^m|j_3|}$$
$$\leq C \Big (\sum_{0<|j_3|\leq N} \frac 1{|j_3|} \Big)
\Big(\sum_{\substack{0<|j_1|, |j_2|\leq N\\|j_1+j_2|>N}}
\frac 1{|j_1|^{m+1}|j_2|^m} \Big)=O\Big(\frac{\ln^2 N}{N}\Big)$$
where we have used Lemma \ref{prod}.\\
Next we estimate $II_N(\omega)$ (see \eqref{INN9}). By Lemma
\ref{intpar1}
it is sufficient to prove that:
\begin{equation}\label{jzerojenne}
\left \|\int \pi_{>N} (\partial_x^j \varphi_N^-(\omega) 
\partial^{m-j+1}_x\varphi^-_N(\omega))
\pi_{>N} (\partial_x \varphi_N^+(\omega) \partial_x^{m+1} 
\varphi_N^+(\omega)) dx \right \|_{L^q_\omega}=o(1)
\end{equation}
and
\begin{equation}\label{jzerojenne1}
\left \|\int \pi_{>N} (\partial_x^j \varphi_N^+(\omega) 
\partial^{m-j+1}_x\varphi^+_N(\omega))
\pi_{>N} (\partial_x \varphi_N^-(\omega) \partial_x^{m+1} 
\varphi_N^-(\omega)) dx \right \|_{L^q_\omega}=o(1)
\end{equation}
$$\hbox{ as } N\rightarrow \infty, 
\hbox{ } \forall j=1,.., m.$$
Indeed the most delicate cases are $j=1,m$. Moreover \eqref{jzerojenne}
and \eqref{jzerojenne1} can be treated by a similar argument. We shall focus for simplicity on \eqref{jzerojenne}
in the case $j=1$ (the case $j=m$ is similar), i.e.
$$\lim_{N\rightarrow \infty}
\left \|\int \pi_{>N} (\varphi_N^+(\omega)
\partial^{m+1}_x\varphi_N^+(\omega))
\pi_{>N} (\partial_x \varphi_N^-(\omega)\partial_x^{m} 
\varphi^-_N(\omega)) dx\right \|_{L^q_\omega}=0.$$
Notice that we have
$$\int \pi_{>N} (\varphi_N^+(\omega)
\partial^{m+1}_x\varphi_N^+(\omega))
\pi_{>N} (\partial_x \varphi_N^-(\omega)\partial_x^{m} 
\varphi^-_N(\omega)) dx$$$$=\sum_{\substack{0<|j_1|, 
|j_2|, |j_3|, |j_4|\leq N\\
j_1, j_2>0, j_3, j_4<0\\
|j_1+j_2|>N\\j_1+j_2+j_3+j_4=0}} 
\frac{\varphi_{j_1}(\omega)}{|j_1|^{m+1}} 
\varphi_{j_2}(\omega)
\frac{\varphi_{j_3}(\omega)}{|j_3|^{m}} 
\frac{\varphi_{j_4}(\omega)}{|j_4|}$$ and hence
the estimate above follows from the following inequalities:
$$\limsup_{N\rightarrow \infty}
\Big \|\sum_{\substack{0<|j_1|, 
|j_2|, |j_3|, |j_4|\leq N\\
j_1, j_2>0, j_3, j_4<0\\
|j_1+j_2|>N\\j_1+j_2+j_3+j_4=0}} 
\frac{\varphi_{j_1}(\omega)}{|j_1|^{m+1}} 
\varphi_{j_2}(\omega)
\frac{\varphi_{j_3}(\omega)}{|j_3|^{m}} 
\frac{\varphi_{j_4}(\omega)}{|j_4|}
\Big \|_{L^q_\omega}$$
$$\leq \limsup_{N\rightarrow \infty} 
C \sum_{\substack{0<|j_1|,|j_3|, |j_4|
\leq N\\
|j_3+j_4|>N}} \frac 1{|j_1|^{m+1} |j_3|^{m}|j_4|}
$$$$\leq C \limsup_{N\rightarrow \infty} 
\Big (\sum_{0<|j_1|\leq N} 
\frac 1{|j_1|^{m+1}} \Big) 
\sum_{\substack{0<|j_3|, |j_4|\leq N\\
|j_3+j_4|>N}} \frac 1{|j_3|^{m}|j_4|}=O
\Big (\frac{\ln N}{N}
\Big )$$
where we have used Lemma \ref{prod} at the last step (recall that by assumption $m\geq 2$).\\
\\\\
Next we prove \eqref{eq:sing} by assuming \eqref{eqin2}.
In particular we treat the case $p(u)\in {\mathcal P}_3(u)$
with 
$$\|p(u)\|= 2m+1 \hbox{ and } |p(u)|\leq m.$$
We treat for simplicity the case 
$p=\partial_x^\alpha u\partial_x^\beta u \partial_x^{\gamma}u$
with $\sup\{\alpha, \beta, \gamma\}\leq m$, $\alpha+\beta+\gamma=2m+1$
(the same argument works for every $p(u)\in {\mathcal P}_3(u)$ such that 
$\tilde p(u)= \partial_x^\alpha u\partial_x^\beta u \partial_x^{\gamma}u$).
Hence we get
$$p^*_N(\varphi_N(\omega))=I_N(\omega) + II_N(\omega)+III_N(\omega)$$
where
$$I_N(\omega)= \int \partial_x^\alpha(\pi_{>N} 
(\varphi_N(\omega) \partial_x \varphi_N(\omega)))
\partial_x^\beta \varphi_N(\omega) \partial_x^{\gamma} \varphi_N(\omega) dx,
$$
$$II_N(\omega)= \int 
\partial_x^\alpha \varphi_N(\omega) \partial_x^\beta 
(\pi_{>N} (\varphi_N(\omega)\partial_x \varphi_N(\omega))) 
\partial_x^{\gamma} \varphi_N(\omega) dx,
$$$$III_N(\omega)= \int \partial_x^\alpha 
\varphi_N(\omega)\partial_x^\beta  
\varphi_N(\omega)\partial^{\gamma}_x
(\pi_{>N} (\varphi_N(\omega)
\partial_x \varphi_N(\omega))) dx.$$ 
We shall prove that
$$\lim_{N\rightarrow \infty} \|I_N(\omega)\|_{L^q_\omega}=0$$
(and in a similar way we can treat $II_N(\omega)$ 
and $III_N(\omega)$). By 
the Leibnitz formula 
it is sufficient to prove 
$$
\lim_{N\rightarrow \infty}
\Big \|\int \pi_{>N} (\partial_x^j \varphi_N
(\omega) \partial_x^{\alpha-j+1}\varphi_N(\omega))
\partial_x^\beta \varphi_N(\omega) \partial_x^{\gamma} 
\varphi_N(\omega) dx\Big \|_{L^q_\omega}=0$$
$$\forall j=0,...,\alpha.$$
We shall treat the case $j=0$ and all the other 
cases can be treated in a similar way.
More precisely we shall prove that
$$
\lim_{N\rightarrow \infty}
\Big \|\int \pi_{>N} (\varphi_N(\omega) 
\partial_x^{\alpha+1}\varphi_N(\omega))
\partial_x^\beta \varphi_N(\omega) \partial_x^{\gamma} 
\varphi_N(\omega) dx\Big \|_{L^q_\omega}=0.$$
Notice that we have 
$$\int \pi_{>N} (\varphi_N(\omega) 
\partial_x^{\alpha+1}\varphi_N(\omega))
\partial_x^\beta \varphi_N(\omega) \partial_x^{\gamma} 
\varphi_N(\omega) dx$$$$=
\sum_{\substack{|j_1|, |j_2|, |j_3|, |j_4|\in (0,N],\\
|j_1+j_2|>N\\j_1+j_2+j_3+j_4=0}} \frac{\varphi_{j_1}(\omega)}
{|j_1|^{m+1}} \frac{\varphi_{j_2}(\omega)}{|j_2|^{m-\alpha}}
\frac{\varphi_{j_3}(\omega)}{|j_3|^{m+1-\beta}} 
\frac{\varphi_{j_4}(\omega)}{|j_4|^{m+1-\gamma}}$$
and hence
by using the triangular inequality we get
$$\Big \|\int \pi_{>N} (\varphi_N(\omega) 
\partial_x^{\alpha+1}\varphi_N(\omega))
\partial_x^\beta \varphi_N(\omega) \partial_x^{\gamma} 
\varphi_N(\omega) dx\Big \|_{L^q_\omega}$$$$\leq C
\sum_{\substack{|j_1|, |j_2|, |j_3|, |j_4|\in (0,N],\\
|j_1+j_2|>N\\j_1+j_2+j_3+j_4=0}} \frac 1{|j_1|^{m+1}
|j_2|^{m-\alpha} 
|j_3|^{m+1-\beta} |j_4|^{m+1-\gamma}}.$$
Next we consider three possible cases:
\\
\\
{\em First subcase: $\alpha=1$, $\beta=\gamma=m$} 
\\
\\
In this case
we get
$$\Big \|\int (\pi_{>N} \varphi_N(\omega) 
\partial_x^{\alpha+1}\varphi_N(\omega))
\partial_x^\beta \varphi_N(\omega) 
\partial_x^{\gamma} \varphi_N(\omega) dx\Big \|_{L^q_\omega}$$
$$\leq C
\sum_{\substack{|j_1|, |j_2|, |j_4|\in (0,N],\\|j_1+j_2|>N}} 
\frac 1{|j_1|^{m+1}|j_2|^{m-1} |j_4|}$$
$$\leq \Big(\sum_{0<|j_4|\leq N} 
\frac 1{|j_4|}\Big)\Big(\sum_{\substack{0<|j_1|, |j_2|\leq N,\\ |j_1+j_2|>N}}
\frac 1{|j_1|^{m+1}|j_2|^{m-1}}\Big )
=O\Big (\frac{\ln ^2 N}{N} \Big) $$
where we have used Lemma \ref{prod}. 
\\
\\
{\em Second subcase: $\alpha\leq \beta=\gamma< m$} 
\\
\\
In this case we get
$$\Big \|\int (\pi_{>N} \varphi_N(\omega) 
\partial_x^{\alpha+1}\varphi_N(\omega))
\partial_x^\beta \varphi_N(\omega) \partial_x^{\gamma} 
\varphi_N(\omega) dx\Big \|_{L^q_\omega}$$$$\leq C
\sum_{\substack{|j_1|, |j_2|, |j_3|, |j_4|\in (0,N],\\
|j_1+j_2|>N}} \frac 1{|j_1|^{m+1}|j_2| 
|j_3|^{2}}$$
$$\leq C \Big(\sum_{0<|j_4|\leq N} 
\frac 1{|j_3|^2}\Big)\Big(\sum_{\substack{0<|j_1|, |j_2|\leq N,\\ |j_1+j_2|>N}}
\frac 1{|j_1|^{m+1}|j_2|}\Big )
=O\Big (\frac{\ln N}{N} \Big) $$
where we have used Lemma \ref{prod}.
\\
\\
{\em Third subcase: $\alpha\leq \beta<\gamma\leq m$} 
\\
\\
In this case we get
$$\Big \|\int (\pi_{>N} \varphi_N(\omega) 
\partial_x^{\alpha+1}\varphi_N(\omega))
\partial_x^\beta \varphi_N(\omega) \partial_x^{\gamma} 
\varphi_N(\omega) dx\Big \|_{L^q_\omega}$$
$$\leq C
\sum_{\substack{|j_1|, |j_2|, |j_3|, |j_4|\in (0,N],\\
|j_1+j_2|>N}} \frac 1{|j_1|^{m+1}|j_2| 
|j_3|^{2}}$$
and we can conclude as in the previous case. 
\\\\
The proof of \eqref{eq:sing} under the assumption 
$p(u)\in {\mathcal P}_j(u)$ with $j=4,...,2m+4$ and \begin{equation*}
\|p(u)\|= 2m-j+4 \hbox{ and } |p(u)|\leq m
\end{equation*}
can be done by a similar argument as above.
\\
\\
By Propositions \ref{defG} and \ref{defGodd} the proof of 
$$\lim_{N\rightarrow \infty}\|H_N(u)\|_{L^q(d\mu_{m+1})}+
\sum_{j_0=0}^{2m}\|L_N^{j_0}(u)\|_{L^q(d\mu_{m+1})}=0$$
follows from 
\begin{equation*}
\lim_{N\rightarrow \infty} \Big \|\int p^*_N(\pi_N u) dx\Big \|_{L^q(d\mu_{m+1})}=0
 \end{equation*}
where:
\begin{equation*}
p(u)\in \bigcup_{j=3}^{2m+3} {\mathcal P}_j(u), 
\end{equation*}
such that
$\|p(u)\|\leq 2m$ and  $ |p(u)|\leq m$.
Those estimates can be done arguing as in the proof of 
\eqref{eq:sing} under the assumption \eqref{eqin2}. We skip the details.
This completes the proof of Proposition~\ref{ens}.
\end{proof}
\section{Some deterministic results}\label{detsec}
We shall study qualitative properties of solutions to the following Cauchy problems:
\begin{equation}\label{BOdet}
\left \{ \begin{array}{c}
\partial_t u + H \partial_x^2 u + u\partial_x u=0\\
u(0)=u_0
\end{array}
\right .
\end{equation}
and (for every fixed $N\in \N$)
\begin{equation}\label{BON}
\left \{ \begin{array}{c}
\partial_t u_N + H \partial_x^2 u_N + \pi_N \big ((\pi_Nu_N)\partial_x (\pi_Nu_N)\big )=0\\
u(0)=u_0
\end{array}
\right .
\end{equation}
The corresponding unique global solutions (that exist provided that
$u_0\in H^s$ for some $s\geq 0$)
are denoted respectively by
$$u(t, .)=\Phi_t(u_0)
\hbox{ and }
u_N(t,.)=\Phi_t^N(u_0).
$$
Indeed, in the case of \eqref{BON}, to get the global well-posedness 
one simply needs to use that the frequencies $>N$ evolve linearly,
while the other frequencies evolve under an ODE with a conserved
$L^2$ norm. 
For every subset $A\subset H^s$ (with $s\geq 0$ fixed) and for every $t\in \R$ we
define the set $\Phi_t^N (A)$ as follows:
\begin{equation}\label{eq:PhiNBO}
\Phi_t^N(A)=\{u_N (t,.)\in H^s|  \hbox{ where }
u_N(t,.) \hbox{ solves } \eqref{BON} \hbox{ with } u_0\in A\}.
\end{equation}
Recall that the definition of $\Phi_t(A)$ is given in \eqref{eq:PhiBO}.
The main result of this section is the following proposition.
\begin{prop}\label{prop:det}
Let $2\leq s<\sigma$ be fixed and $R>0$. Then there exists 
$\bar t=\bar t(R)>0$ such that 
for every $\varepsilon>0$ there exists $N_0(\varepsilon)$ with the property
$$\Phi_{t}^N(A)\subset \Phi_{t}(A) + B^s(\varepsilon), \hbox{ } \forall N>N_0(\varepsilon),
\forall t\in (-\bar t, \bar t), \forall A\subset B^\sigma(R).$$
\end{prop}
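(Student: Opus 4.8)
The plan is to compare the solution $u_N(t) = \Phi_t^N(u_0)$ of the truncated problem \eqref{BON} with the solution $u(t) = \Phi_t(u_0)$ of the genuine Benjamin-Ono equation \eqref{BOdet}, both issued from the same datum $u_0 \in B^\sigma(R)$, and to show that $\|u_N(t) - u(t)\|_{H^s} < \ep$ for $N$ large, uniformly on a short time interval $(-\bar t, \bar t)$ whose length depends only on $R$. First I would record the \emph{a priori} bound: for data in $B^\sigma(R)$, the $H^\sigma$ norm of both $u(t)$ and $u_N(t)$ stays bounded by some $C(R)$ on a time interval $(-\bar t(R), \bar t(R))$. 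For the genuine flow this follows from the local well-posedness theory of Molinet \cite{M} together with the conservation laws at the relevant integer level; for the truncated flow \eqref{BON} one uses that the modes $|n| > N$ evolve linearly (hence their contribution to any Sobolev norm is exactly conserved) while the low modes satisfy an ODE system with conserved $L^2$ norm, and a standard energy estimate propagates the $H^\sigma$ bound on a time interval depending only on $R$ and not on $N$. Actually, since $s < \sigma$, a little room in regularity is available, which will be used in the interpolation step below.

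Next I would set $v_N = u_N - u$ and derive the equation it satisfies. Writing out \eqref{BON} minus \eqref{BOdet}:
\begin{equation*}
\partial_t v_N + H\partial_x^2 v_N = -\pi_N\big((\pi_N u_N)\partial_x(\pi_N u_N)\big) + u\partial_x u.
\end{equation*}
The right-hand side I would split as
\begin{equation*}
\big(u\partial_x u - (\pi_N u)\partial_x(\pi_N u)\big) + \big((\pi_N u)\partial_x(\pi_N u) - \pi_N((\pi_N u)\partial_x(\pi_N u))\big) + \big(\pi_N((\pi_N u)\partial_x(\pi_N u)) - \pi_N((\pi_N u_N)\partial_x(\pi_N u_N))\big).
\end{equation*}
The first bracket and the second bracket are ``error'' terms that are small because $\pi_{>N} u \to 0$ in $H^\sigma$ (and hence in $H^{s+1}$, using $s+1 \le \sigma$ after possibly shrinking, or rather using that $\pi_{>N}$ gains a factor $N^{-(\sigma - s - 1)}$ when $\sigma > s+1$; if $\sigma \le s+1$ one still has $\|\pi_{>N} u\|_{H^{s}} \to 0$ which after the energy estimate is enough) — these contribute a quantity $\delta_N \to 0$ as $N \to \infty$ uniformly for $u_0 \in B^\sigma(R)$ and $|t| < \bar t$. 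The third bracket is genuinely of the form $\pi_N(\text{bilinear in } u_N, u - u_N)$ and is controlled by $C(R)\|v_N\|_{H^s}$ using the algebra-type / commutator estimates for the Benjamin-Ono nonlinearity at regularity $s \ge 2$ (here $s \ge 2$ is comfortably subcritical, so only classical product estimates in $H^s$, $s > 3/2$, are needed, exactly as the paper remarks that ``the deterministic estimates used in this paper are rather classical'').

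Then I would run a standard $H^s$ energy estimate on $v_N$: since $H\partial_x^2$ is skew-adjoint it does not contribute, and one obtains
\begin{equation*}
\frac{d}{dt}\|v_N(t)\|_{H^s}^2 \le C(R)\|v_N(t)\|_{H^s}^2 + \delta_N \|v_N(t)\|_{H^s},
\end{equation*}
valid for $|t| < \bar t(R)$. With $v_N(0) = 0$, Gronwall's inequality gives $\|v_N(t)\|_{H^s} \le \delta_N e^{C(R)\bar t}$ for all $|t| < \bar t$. Choosing $N_0(\ep)$ so that $\delta_N e^{C(R)\bar t} < \ep$ for $N > N_0(\ep)$ yields $u_N(t) = u(t) + w$ with $u(t) \in \Phi_t(A)$ and $\|w\|_{H^s} < \ep$, i.e. $\Phi_t^N(A) \subset \Phi_t(A) + B^s(\ep)$ for every $A \subset B^\sigma(R)$, which is the claim. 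The main obstacle is the first step — establishing the $N$-uniform $H^\sigma$ a priori bound on the truncated flow on a time interval depending only on $R$ — because \eqref{BON} does not conserve the higher energies $E_{k/2}$; but as noted this is handled by treating high and low modes separately (linear evolution above $N$, $L^2$-conserving ODE below $N$) together with a crude energy inequality at integer Sobolev level, where the loss-of-derivative in the Benjamin-Ono nonlinearity is the only subtlety and is dealt with by the same classical product/commutator estimates used in the Gronwall step. Everything else is routine.
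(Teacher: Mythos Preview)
Your overall strategy---uniform $H^\sigma$ a priori bounds on both flows for short time, then a Gronwall argument on the difference---is exactly right, and your treatment of the a priori bound on $\Phi_t^N$ (linear evolution on high modes, energy inequality on low modes) matches the paper's Lemma~\ref{lem:boundedness}. The difference is in how you close the estimate on $v_N=u_N-u$.

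You propose to run the energy estimate directly in $H^s$ and claim the first two ``error'' brackets contribute a quantity $\delta_N\to 0$. This is where a gap appears when $\sigma<s+1$ (which is precisely the situation in the paper's application, e.g.\ $2\le s<\sigma<5/2$ for $k=6$). The first bracket equals $\tfrac12\partial_x\big((u+\pi_N u)\pi_{>N}u\big)$, and bounding its $H^s$ norm requires $H^{s+1}$ control on the product; since $\pi_{>N}u$ only gains $N^{-(\sigma-s)}$ in passing from $H^\sigma$ to $H^s$, you cannot absorb the extra derivative uniformly over $B^\sigma(R)$ unless $\sigma\ge s+1$. The transport/commutator structure that rescues the derivative loss applies to the quasilinear part (your third bracket, paired against $v_N$), but not to these inhomogeneous error terms. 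Your parenthetical remark that ``$\|\pi_{>N}u\|_{H^s}\to 0$ is enough after the energy estimate'' does not actually close this: the pairing $\langle D^{2s}v_N,\text{first bracket}\rangle$ still sees the lost derivative.

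The paper sidesteps this cleanly by proving the difference estimate at the $L^2$ level (claim \eqref{eq:claim}), where the derivative loss is handled by elementary integration by parts using only $H^2$ control on the individual flows, and then interpolating
\[
\|v_N\|_{H^s}\le \|v_N\|_{L^2}^{\theta}\,\|v_N\|_{H^\sigma}^{1-\theta}
\]
with the $H^\sigma$ a priori bound. This works for every $s<\sigma$ with no restriction on $\sigma-s$. You in fact foreshadow an ``interpolation step'' in your first paragraph but never use it; inserting it---i.e.\ doing the Gronwall in $L^2$ rather than $H^s$---is exactly what is needed to fill the gap.
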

First we prove some lemmas.
\begin{lem}\label{lem:boundH2}
Let $R>0$ and $T>0$ be fixed, then
$$ 
\sup_{\substack{t\in [0,T]\\u_0\in B^\sigma(R)}} 
\|\Phi_t(u_0)\|_{H^2}<\infty.
$$ 
\end{lem}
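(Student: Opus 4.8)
First I would reduce to initial data in $H^2$. Since we work with zero mean value functions and $\sigma\ge 2$ we have $\|u\|_{H^2}\le\|u\|_{H^\sigma}$, so that $B^\sigma(R)\subset B^2(R)$ and it suffices to bound $\|\Phi_t(u_0)\|_{H^2}$ in terms of $\|u_0\|_{H^2}$ alone; the bound I shall obtain is moreover independent of $t\in\R$, so the restriction $t\in[0,T]$ is irrelevant. For $u_0\in H^2$ the corresponding solution belongs to $C(\R;H^2)$ by the work of Molinet \cite{M} (see also \cite{ABFS}), and the first conservation laws $E_0=\|u\|_{L^2}^2$, $E_{1/2}$, $E_1$, $E_{3/2}$, $E_2$ are finite and, by density of smooth data together with continuity of the flow map on $H^2$, conserved at this level of regularity.

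The plan is then a finite bootstrap in regularity. Conservation of $E_0$ gives $\|\Phi_t(u_0)\|_{L^2}=\|u_0\|_{L^2}$. Suppose that, for some $k\in\{1,2,3,4\}$, the norms $\|\Phi_t(u_0)\|_{\dot H^{j/2}}$ are already bounded by a constant depending only on $\|u_0\|_{H^2}$ for every $j$ with $0\le j<k$ (this holds for $k=1$ by the previous sentence). Using the structure \eqref{even}--\eqref{odd}, write $E_{k/2}(u)=\|u\|_{\dot H^{k/2}}^2+R_{k/2}(u)$, where $R_{k/2}(u)$ is a finite sum of integrals $\int p(u)\,dx$ with $p(u)$ of order $\ge 3$ in $u$, each factor carrying at most $k/2$ derivatives, and at most one factor — the one appearing in the distinguished cubic term of \eqref{even}, which by Proposition~\ref{prop:cubstruc} may be taken to be $c\int u\,(H\partial_x^{k/2-1}u)\,\partial_x^{k/2}u\,dx$ in the even case — actually carrying $k/2$ derivatives. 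Placing the factor(s) with the largest number of derivatives in $L^2$, the remaining ones in $L^4$ or $L^\infty$, and interpolating by Gagliardo--Nirenberg together with the Sobolev embedding on the torus, one estimates each such integral by $\|u\|_{\dot H^{k/2}}^{\theta}$ with $\theta\le 1$, times powers of the norms $\|u\|_{\dot H^{j/2}}$, $j<k$, which are controlled by the induction hypothesis. Young's inequality then absorbs the factor $\|u\|_{\dot H^{k/2}}^{\theta}$ and yields
$$
\|\Phi_t(u_0)\|_{\dot H^{k/2}}\le C\big(\|u_0\|_{H^2}\big),\qquad t\in\R,
$$
which closes the induction. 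Running this for $k=1,2,3,4$ controls $\|\Phi_t(u_0)\|_{\dot H^{1/2}}$, $\|\Phi_t(u_0)\|_{\dot H^1}$, $\|\Phi_t(u_0)\|_{\dot H^{3/2}}$ and $\|\Phi_t(u_0)\|_{\dot H^2}$; combined with conservation of $E_0$ this gives $\sup_{t\in\R,\,u_0\in B^\sigma(R)}\|\Phi_t(u_0)\|_{H^2}<\infty$.

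The point requiring care — and the main obstacle — is to verify at each step that $\|u\|_{\dot H^{k/2}}$ never appears on the right hand side with power $2$, which would block the absorption. Since every $p(u)$ occurring in $R_{k/2}$ satisfies $\|p(u)\|<k$, two distinct factors cannot simultaneously carry $k/2$ derivatives; hence $\|u\|_{\dot H^{k/2}}$ enters at most linearly, the only genuinely dangerous term being the cubic one isolated in Proposition~\ref{prop:cubstruc}, which is controlled by $\|u\|_{L^\infty}\|u\|_{\dot H^{k/2-1}}\|u\|_{\dot H^{k/2}}$ (with $\|u\|_{L^\infty}$ and $\|u\|_{\dot H^{k/2-1}}$ already bounded). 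Everything else is routine multilinear analysis; since only $k\le 4$ is needed here I would carry out the estimates explicitly just for $E_1$ and $E_2$ (for odd $k$ the remainder in \eqref{odd} has each factor carrying fewer than $k/2$ derivatives, so no dangerous term occurs), and refer to \eqref{even}, \eqref{odd} and Proposition~\ref{prop:cubstruc} for the general bookkeeping.
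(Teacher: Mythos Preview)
Your proposal is correct and follows exactly the approach the paper intends: the paper's own proof reads, in its entirety, ``The proof is standard and follows from the conservation of $E_0, E_{1/2}, E_1, E_{3/2}, E_2$ along the solutions of \eqref{BOdet}. We skip the details.'' Your write-up is a faithful expansion of this standard bootstrap, using the structural information in \eqref{even}--\eqref{odd} to see that the top norm $\|u\|_{\dot H^{k/2}}$ enters at most linearly in $R_{k/2}$ and can therefore be absorbed. One tiny caveat: the bound $\|u\|_{L^\infty}\|u\|_{\dot H^{k/2-1}}\|u\|_{\dot H^{k/2}}$ you quote for the dangerous cubic term is not quite available at the step $k=2$ (only $H^{1/2}$ is controlled inductively, which does not embed in $L^\infty$); but as you yourself note, one simply does the small cases by hand with a different H\"older splitting (e.g.\ $L^4\times L^4\times L^2$), so this is cosmetic.
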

\begin{proof} The proof is standard and follows from the conservation of
$E_0, E_{1/2}, E_1,E_{3/2}, E_2$ along the
solutions of \eqref{BOdet}.
We skip the details.
\end{proof}
\begin{lem}\label{lem:boundedness}
Let $\sigma>2$, $T>0$ be fixed and $R>0$, then
\begin{equation}\label{eq:ni}\sup_{\substack{t\in [0,T]\\u_0\in B^\sigma(R)}} \|\Phi_t(u_0))\|_{H^\sigma}<\infty.
\end{equation}
Moreover there exists $\bar t=\bar t(R)\in (0, T]$ such that
\begin{equation}\label{eq:nik}\sup_{\substack{t\in [0,\bar t]\\N\in \N, u_0\in B^\sigma(R)}} \|\Phi_t^N(u_0))\|_{H^\sigma}<\infty.\end{equation}
\end{lem}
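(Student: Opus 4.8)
The plan is to establish the two bounds in Lemma~\ref{lem:boundedness} separately, the first by a standard energy argument along the true flow $\Phi_t$ and the second by a continuity/bootstrap argument that transfers this bound to the truncated flows $\Phi_t^N$ on a short, $N$-independent, time interval.

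For \eqref{eq:ni}: the equation \eqref{BOdet} does not preserve $H^\sigma$ for $\sigma>2$ via a higher conservation law of exactly this order, so instead I would differentiate $\|\Phi_t(u_0)\|_{H^\sigma}^2$ in time and perform an energy estimate. Writing $u=\Phi_t(u_0)$ and commuting $\langle\partial_x\rangle^\sigma$ through the nonlinearity $u\partial_x u$, the usual commutator estimate (Kato--Ponce type) gives
$$
\frac{d}{dt}\|u(t)\|_{H^\sigma}^2 \lesssim \|\partial_x u(t)\|_{L^\infty}\,\|u(t)\|_{H^\sigma}^2,
$$
since the worst term $\int \langle\partial_x\rangle^\sigma u\,[\langle\partial_x\rangle^\sigma,u]\partial_x u\,dx$ is controlled by $\|\partial_x u\|_{L^\infty}\|u\|_{H^\sigma}^2$ and the symmetric term $\tfrac12\int \partial_x u\,(\langle\partial_x\rangle^\sigma u)^2\,dx$ likewise; the contribution of $H\partial_x^2 u$ vanishes by skew-adjointness. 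Now $\|\partial_x u(t)\|_{L^\infty}\lesssim \|u(t)\|_{H^2}$ by Sobolev embedding on the torus, and Lemma~\ref{lem:boundH2} bounds this uniformly for $t\in[0,T]$ and $u_0\in B^\sigma(R)$. Hence Gronwall's inequality yields $\|u(t)\|_{H^\sigma}\le \|u_0\|_{H^\sigma}\exp(C_{R,T}\,t)\le R\,e^{C_{R,T}T}$, which is \eqref{eq:ni}.

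For \eqref{eq:nik}: here the difficulty is that we have no analogue of Lemma~\ref{lem:boundH2} for the truncated flows, so we cannot close the Gronwall loop globally in $t$; this is the main obstacle. The remedy is to run the $H^\sigma$ energy estimate directly on \eqref{BON}. Setting $v=\Phi_t^N(u_0)=v_{\le N}+v_{>N}$ where $v_{>N}=\pi_{>N}v$ evolves by the \emph{linear} equation (so $\|v_{>N}(t)\|_{H^\sigma}=\|\pi_{>N}u_0\|_{H^\sigma}\le R$ is conserved), only the low-frequency part feels the nonlinearity $\pi_N((\pi_N v)\partial_x(\pi_N v))$. The same commutator estimate, now with the harmless projectors $\pi_N$ (which are bounded on every Sobolev space and self-adjoint), gives
$$
\frac{d}{dt}\|v(t)\|_{H^\sigma}^2 \lesssim \|\partial_x(\pi_N v)(t)\|_{L^\infty}\,\|v(t)\|_{H^\sigma}^2 \lesssim \|v(t)\|_{H^\sigma}^2\cdot\|v(t)\|_{H^\sigma},
$$
where I crudely bounded $\|\partial_x \pi_N v\|_{L^\infty}\lesssim \|v\|_{H^\sigma}$ using $\sigma>2$. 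This is a Riccati-type differential inequality $y'\lesssim y^{3/2}$ with $y(0)=\|u_0\|_{H^\sigma}^2\le R^2$, whose solution stays bounded on a time interval $[0,\bar t]$ with $\bar t=\bar t(R)>0$ depending only on $R$ and \emph{not} on $N$. Shrinking $\bar t$ if necessary so that $\bar t\le T$, this gives the uniform bound \eqref{eq:nik}. I would remark that the constants in the commutator estimates are independent of $N$ precisely because $\pi_N$ has operator norm $1$ on all $H^\rho$, which is what makes the blow-up time $\bar t$ uniform; the elementary estimate $\|w\|_{W^{1,\infty}}\lesssim\|w\|_{H^\sigma}$ for $\sigma>3/2$ (a fortiori $\sigma>2$) is what allows one to dispense with any further conservation law at this stage.
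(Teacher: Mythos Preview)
Your proposal is correct and follows essentially the same approach as the paper: an $H^\sigma$ energy estimate with a Kato--Ponce commutator bound, closed via Lemma~\ref{lem:boundH2} and Gronwall for the true flow, and closed via a Riccati/continuity argument on a short $N$-independent interval for the truncated flow after splitting off the linearly-evolving high frequencies. The paper makes the continuity argument for \eqref{eq:nik} slightly more explicit (analyzing the roots of $x-R-Ctx^2$), but the substance is identical.
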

\begin{proof}
{\bf First step: estimate for 
$\Phi_t(u_0)$ (uniform in time)}\\
\\
Set $D=(1-\partial_x^2)^{1/2}$. We have
$$\partial_t (D^\sigma \Phi_t(u_0)) + H \partial_x^2 (D^\sigma \Phi_t(u_0)) 
+ D^\sigma( \Phi_t(u_0)\partial_x \Phi_t(u_0))=0.
$$
Multiplication by $D^\sigma \Phi_t(u_0)$  in conjunction with standard properties of the Hilbert transform $H$
and with elementary calculus gives
\begin{equation}\label{eq:gron1}\frac 12 \frac d{dt} \|\Phi_t(u_0)\|_{H^\sigma}^2+ 
\int D^\sigma(\Phi_t(u_0)\partial_x \Phi_t(u_0)) D^\sigma \Phi_t(u_0) dx=0.
\end{equation}
Notice that we have the following identity 
\begin{multline}\label{kato}
\int D^\sigma(\Phi_t(u_0)\partial_x \Phi_t(u_0)) D^\sigma \Phi_t(u_0) dx=
\\
\int \Phi_t(u_0)\partial_x 
(D^\sigma \Phi_t(u_0))D^\sigma \Phi_t(u_0)
+\int [D^\sigma, \Phi_t(u_0)] \partial_x \Phi_t(u_0) D^\sigma \Phi_t(u_0) dx.
\end{multline}
By using integration by parts and the Sobolev embedding $H^1\subset L^\infty$, we estimate the first term on the r.h.s. of \eqref{kato} as follows:
$$
\Big|\int \Phi_t(u_0)\partial_x 
(D^\sigma \Phi_t(u_0))D^\sigma \Phi_t(u_0)\Big|
\leq C \| \Phi_t(u_0)\|_{H^2} \|\Phi_t(u_0)\|_{H^\sigma}^2.
$$
Next, we recall the following form of the Kato-Ponce (see \cite{Ponce}) commutator estimate:
\begin{equation}\label{gger}
\|[D^\sigma, f] g\|_{L^2}\leq C(\|f\|_{H^2}\|g\|_{H^{\sigma-1}}+\|f\|_{H^\sigma}\|g\|_{H^1}). 
\end{equation}
Estimate \eqref{gger} is obtained in \cite{Ponce} for functions on $\R$. Its extension to periodic functions can be done by a localization argument.
By combining \eqref{gger} with the Cauchy-Schwarz inequality, we can estimate the second 
term on the r.h.s. of \eqref{kato} as follows:
$$
\Big|\int [D^\sigma, \Phi_t(u_0)] \partial_x \Phi_t(u_0) D^\sigma \Phi_t(u_0) dx\Big|
\leq C \| \Phi_t(u_0)\|_{H^2} \|\Phi_t(u_0)\|_{H^\sigma}^2.
$$
Therefore, we obtained the estimate 
$$\Big| \int D^\sigma(\Phi_t(u_0)\partial_x \Phi_t(u_0)) D^\sigma \Phi_t(u_0) dx\Big|
\leq C \| \Phi_t(u_0)\|_{H^2} \|\Phi_t(u_0)\|_{H^\sigma}^2.$$
Hence by Lemma~\ref{lem:boundH2} and \eqref{eq:gron1}
we get
$$\Big |\frac 12 \frac d{dt} \|\Phi_t(u_0)\|_{H^\sigma}^2\Big |\leq C \|\Phi_t(u_0)\|_{H^\sigma}^2,
\hbox{ } \forall u_0\in  B^\sigma(R)$$
that by the Gronwall lemma gives 
$$\sup_{\substack{t\in [0,T]\\u_0\in  B^\sigma(R)}} \|\Phi_t (u_0)\|_{H^\sigma}<\infty.$$
This concludes the proof of \eqref{eq:ni}.
\\
{\bf Second step: estimate for $\Phi^N_t(u_0)$ (for short time)}
\\
\\
Notice that the solution $u_N(t,x)=\Phi^N_t(u_0)$ to \eqref{BON} can be split as
$$u_N(t, x)=v_N(t,x)+ w_N(t,x)$$
where $w_N(t,x)$ is the solution of the linear Cauchy problem \begin{equation*}
\left \{ \begin{array}{c}
\partial_t w_N + H\partial_x^2 w_N=0\\
w_N(0)=\pi_{>N} u_0
\end{array}
\right .
\end{equation*}
and $v_N(t,x)$ satisfies the ODE
\begin{equation*}
\left \{ \begin{array}{c}
\partial_t v_N+ H\partial_x^2 v_N+\pi_N (v_N\partial_x v_N)=0\\
v_N(0)=\pi_{N} u_0
\end{array}
\right .
\end{equation*}
Observe that $\pi_N (v_N)=v_N$. Of course the $H^\sigma$-norm is preserved along free evolution.
Hence we have to control just the $H^\sigma$-norm of $v_N(t, x)$ as long as $u_0\in  B^\sigma(R)$.\\
It is useful to introduce the modified flow
\begin{equation}\label{fllow}
\tilde \Phi_t^N(u_0)=v_N(t,x)
\end{equation}
where $v_N(t, x)$ is defined as above.\\
By using the property $[D^\sigma, \pi_N]=0$ we get
$$\partial_t (D^\sigma \tilde \Phi^N_t(u_0)) + H \partial_x^2 (D^\sigma \tilde \Phi^N_t(u_0)) 
+ \pi_N D^\sigma( \tilde \Phi^N_t(u_0)\partial_x \tilde \Phi^N_t(u_0))=0.$$
After multiplication by $D^\sigma \tilde \Phi^N_t(u_0)$ and integration we deduce
\begin{equation*}\frac 12 \frac d{dt} \|\tilde \Phi^N_t(u_0)\|_{H^\sigma}^2+ \int \pi_N D^\sigma(\tilde \Phi^N_t(u_0) \partial_x \tilde \Phi^N_t(u_0) ) 
D^\sigma \tilde \Phi^N_t(u_0) dx=0.
\end{equation*}
Since $\pi_N (\tilde \Phi^N_t(u_0))= \tilde \Phi^N_t(u_0)$, then the identity above is equivalent to
\begin{equation*}\frac 12 \frac d{dt} \| \tilde \Phi^N_t(u_0)\|_{H^\sigma}^2+ \int D^\sigma(\tilde \Phi^N_t(u_0) \partial_x \tilde \Phi^N_t(u_0) ) 
D^\sigma \tilde \Phi^N_t(u_0) dx=0.
\end{equation*}
Arguing as in the first step we get
\begin{equation*}\frac 12 \frac d{dt} \| \tilde \Phi^N_t(u_0)\|_{H^\sigma}^2\leq C  \| \tilde \Phi^N_t(u_0)\|_{H^\sigma}^3
\end{equation*}
which in turn is equivalent to
\begin{equation*}\frac d{dt} \| \tilde \Phi^N_t(u_0)\|_{H^\sigma}\leq C  \| \tilde \Phi^N_t(u_0)\|_{H^\sigma}^2.
\end{equation*}
By the estimate above we deduce  
$$ \| \tilde \Phi^N_t(u_0)\|_{H^\sigma}\leq  \| \pi_N(u_0)\|_{H^\sigma} + C\int_0^t  \| \tilde \Phi^N_s(u_0)\|_{H^\sigma}^2 ds$$
$$\leq  R  + C\int_0^t  \| \tilde \Phi^N_s(u_0)\|_{H^\sigma}^2 ds, \hbox{ } \forall u_0\in  B^\sigma(R)$$
that in turn implies
$$\sup_{s\in [0, t]}\| \tilde \Phi^N_s(u_0)\|_{H^\sigma} \leq  R + C t 
\Big (\sup_{s\in [0, t]}\| \tilde \Phi^N_s(u_0)\|_{H^\sigma}\big )^2, \hbox{ } \forall u_0\in  B^\sigma(R).$$
Next we consider the real valued function
$$x\rightarrow f_{R,t}(x)=x - R -Ctx^2$$
and we notice that if we denote by
$x_{\pm}(R, t)$ the solutions of  $f_{R,t}(x)=0$,
then 
$$x_{\pm}(R, \bar t )\in \R,x_{-}(R, \bar t)<x_{+}(R, \bar t)  \hbox{ and } x_{-}(R, \bar t)= 4R $$$$\hbox{ provided that } \bar t= 3/(16CR).$$ 
The conclusion follows by a classical continuity argument in conjunction with the fact that
the function 
$$t\rightarrow F_{u_0,N}(t)=\sup_{s\in [0, t]}\| \tilde \Phi^N_s(u_0)\|_{H^\sigma}$$
is continuous and  $F_{u_0,N}(0)\in [0, R]$.

\end{proof}
\begin{proof}[Proof of Proposition~\ref{prop:det}]
We give the proof only for positive times. The analysis for negative times is the same, modulo some direct modifications.
We claim the following estimate
\begin{equation}\label{eq:claim}
\lim_{N\rightarrow \infty} \Big (\sup_{\substack{t\in [0,\bar t]\\u_0\in A}} 
\|\Phi_t(u_0)- \Phi^N_t(u_0)\|_{L^2}\Big )=0
\end{equation}
where $\bar t=\bar t(R)$ is given in Lemma \ref{lem:boundedness}.
Notice that by interpolation we get
$$\|\Phi_t(u_0)- \Phi^N_t(u_0)\|_{H^s}\leq \|\Phi_t(u_0)- \Phi^N_t(u_0)\|_{L^2}^\theta 
\|\Phi_t(u_0)- \Phi^N_t(u_0)\|_{H^\sigma}^{1-\theta}
$$
for a suitable $\theta\in (0, 1)$.
By combining this fact with \eqref{eq:claim} and with  Lemma \ref{lem:boundedness}
we get
$$\lim_{N\rightarrow \infty} \Big (\sup_{\substack{t\in [0,\bar t]\\u_0\in A}} 
\|\Phi_t(u_0)- \Phi^N_t(u_0)\|_{H^s}\Big )=0$$
which concludes the proof of Proposition~\ref{prop:det}.\\
Next we focus on the proof of \eqref{eq:claim}.
Notice that
$\Phi_t(u_0)- \Phi^N_t(u_0)$ solve the following equation
$$\partial_t (\Phi_t(u_0)- \Phi^N_t(u_0)) + H\partial_x^2 (\Phi_t(u_0)- \Phi^N_t(u_0)) $$$$+ \frac 12 \partial_x ((\Phi_t(u_0))^2- 
(\pi_N \Phi_t^N(u_0))^2)
+ \frac 12 (1-\pi_N)\partial_x (\pi_N \Phi^N_t(u_0))^2=0.$$
Multiplication by $\Phi_t(u_0)- \Phi^N_t(u_0)$ and integration give:
\begin{equation}\label{eq:claim1}\frac 12 \frac d{dt} \int (\Phi_t(u_0)- \Phi^N_t(u_0))^2 dx\end{equation}
$$+ \frac 12 \int \partial_x ((\Phi_t(u_0))^2-(\pi_N \Phi^N_t(u_0))^2) (\Phi_t(u_0)- \Phi^N_t(u_0))
$$$$+ \frac 12 \int (\Phi_t(u_0)- \Phi^N_t(u_0)) \pi_{>N} \partial_x (\pi_N \Phi^N_t(u_0))^2dx=0.$$
By integration by parts we get
$$\int \partial_x ((\Phi_t(u_0))^2-(\pi_N \Phi^N_t(u_0))^2) (\Phi_t(u_0)- \Phi^N_t(u_0)) dx$$$$
= - \int((\Phi_t(u_0))^2-(\pi_N \Phi^N_t(u_0))^2) \partial_x (\Phi_t(u_0)- \Phi^N_t(u_0)) dx$$
$$= - \int((\Phi_t(u_0))^2-(\pi_N \Phi^N_t(u_0))^2) \partial_x (\Phi_t(u_0)- \pi_N \Phi^N_t(u_0)) 
dx $$$$+ \int((\Phi_t(u_0))^2-(\pi_N \Phi^N_t(u_0))^2) \partial_x \big(\pi_{>N} \Phi^N_t(u_0)\big) dx
$$
$$=
\frac 12 \int (\partial_x \Phi_t(u_0) + \partial_x \big(\pi_N \Phi^N_t(u_0)\big)) 
(\Phi_t(u_0)- \pi_N \Phi^N_t(u_0))^2 dx
$$$$+ \int((\Phi_t(u_0))^2-(\pi_N \Phi^N_t(u_0))^2) \partial_x \big(\pi_{>N} \Phi^N_t(u_0)\big) dx
$$
and hence by the H\"older inequality
\begin{equation*} \Big |\int \partial_x 
((\Phi_t(u_0))^2-(\pi_N \Phi^N_t(u_0))^2) (\Phi_t(u_0)- \Phi^N_t(u_0)) dx\Big |\end{equation*}
\begin{equation*}\leq \frac 12 (\|\partial_x \Phi_t(u_0)\|_{L^\infty} + \|\partial_x \big(\pi_N \Phi^N_t(u_0)\big)
\|_{L^\infty}) \|\Phi_t(u_0)- \pi_N \Phi^N_t(u_0)\|_{L^2}^2
\end{equation*}
$$
+ (\|\Phi_t(u_0)\|_{L^\infty} + \|\pi_N \Phi^N_t(u_0)
\|_{L^\infty}) \|\Phi_t(u_0)- \Phi^N_t(u_0)\|_{L^2}\|\big(\pi_{>N} \Phi^N_t(u_0)\|_{H^1}$$$$\forall t\in [0, \bar t].
$$
By the Sobolev embedding $H^1\subset L^\infty$ we can continue the inequality as follows
\begin{equation}\label{eq:claim2} \Big |\int \partial_x 
((\Phi_t(u_0))^2-(\pi_N \Phi^N_t(u_0))^2) (\Phi_t(u_0)- \Phi^N_t(u_0)) dx\Big |
\end{equation}
\begin{equation*}
\leq C \|\Phi_t(u_0)- \Phi^N_t(u_0)\|_{L^2}^2
\end{equation*}
$$
+ (\|\Phi_t(u_0)\|_{H^1} + \|\pi_N \Phi^N_t(u_0)
\|_{H^1}) \|\Phi_t(u_0)- \Phi^N_t(u_0)\|_{L^2} N^{-1}\|\Phi^N_t(u_0)\|_{H^2}$$
$$\leq CN^{-2} + \|\Phi_t(u_0)- \Phi^N_t(u_0)\|_{L^2}^2 , \hbox{ } \forall t\in [0, \bar t]$$
where we have used \eqref{eq:ni} and \eqref{eq:nik} in Lemma \ref{lem:boundedness} to control
$$\sup_{\substack{t\in [0,\bar t],\\ u_0\in A}}\big 
\{\|\Phi_t(u_0)\|_{H^1}, \|\Phi^N_t(u_0)\|_{H^2}\big \}<\infty.$$
Moreover by the Cauchy-Schwarz inequality we have the estimate
$$\Big| \int (\Phi_t(u_0)- \Phi^N_t(u_0)) \pi_{>N} \partial_x (\pi_N \Phi^N_t(u_0))^2dx\Big |
$$$$\leq \|\Phi_t(u_0)- \Phi^N_t(u_0)\|_{L^2} \|\pi_{>N} \partial_x (\pi_N \Phi^N_t(u_0))^2\|_{L^2}
$$
$$\leq \|\Phi_t(u_0)- \Phi^N_t(u_0)\|_{L^2} \|\pi_{>N} (\pi_N \Phi^N_t(u_0))^2\|_{H^1}$$
and hence
$$\Big| \int (\Phi_t(u_0)- \Phi^N_t(u_0)) \pi_{>N} \partial_x (\pi_N \Phi^N_t(u_0))^2dx\Big |
$$$$\leq \|\Phi_t(u_0)- \Phi^N_t(u_0)\|_{L^2} N^{-1}\|(\Phi^N_t(u_0))^2\|_{H^2}.$$
Since $H^2$ is an algebra we get
\begin{equation}\label{eq:claim3}
\Big| \int (\Phi_t(u_0)- \Phi^N_t(u_0)) \pi_{>N} \partial_x (\pi_N \Phi^N_t(u_0))^2dx\Big |
\end{equation}
\begin{equation*}
\leq C \|\Phi_t(u_0)- \Phi^N_t(u_0)\|_{L^2} N^{-1}\|\Phi^N_t(u_0)\|^2_{H^2}\end{equation*}\begin{equation*}
\leq CN^{-2} + \|\Phi_t(u_0)- \Phi^N_t(u_0)\|_{L^2}^2 , \hbox{ } \forall t\in [0, \bar t]
\end{equation*}
where we have used \eqref{eq:nik} in Lemma \ref{lem:boundedness} to control
$\sup_{\substack{t\in [0,\bar t],\\ u_0\in A}}\|\Phi^N_t(u_0)\|^2_{H^2}$.
The proof of \eqref{eq:claim} follows by combining \eqref{eq:claim1}, \eqref{eq:claim2},
\eqref{eq:claim3} with the Gronwall lemma (recall that $\Phi_0(u_0)- \Phi^N_0(u_0)=0)$).
 \end{proof}
\section{Proof of Theorem \ref{lem4}}
To simplify the notations we shall denote
$d\mu=d\mu_{k/2}$, $F_N=F_{k/2,N,R}$. In the sequel we shall always assume
that
$2\leq s < \sigma<(k-1)/2$. Since by assumptions $k\geq 6$ is an even number we can introduce $m\geq 2$ such that $k=2(m+1)$.
We also denote by ${\mathcal B}(H^\sigma)$ the Borel sets in $H^\sigma$.
We shall use the Hamiltonian structure of the flow $\tilde{\Phi}^N_t$ and the finite dimensional Liouville theorem
on the invariance of the Lebesgue measure.
For every $N$, we denote by $E_N$ the real vector space spanned by $(\cos(nx),\sin(nx))_{1\leq n\leq N}$. 
From now on, we consider $\tilde{\Phi}^N_t$ as a flow on $E_N$, defined as the restriction of the flow defined by \eqref{fllow} to $E_N$.
We denote by $E_N^\perp$ the orthogonal 
complementary of $E_N$ in $H^\sigma$. We can see the measure 
 $d\mu$ as a product measure on $E_N\times E_N^\perp$ as follows
$$d
\mu= \gamma_N e^{-\|\pi_N u\|_{H^{k/2}}^2} du_1...du_N \times d\mu^\perp_N$$
where $\gamma_N$ is a suitable renormalization factor.
The measure 
$$
\gamma_N e^{-\|\pi_N u\|_{H^{k/2}}^2} du_1...du_N 
$$
is a measure on $E_N$ while $d\mu^\perp_N$ is a measure on $E_N^\perp$.
More precisely
$$
 du_1...du_N \equiv \prod_{n=1}^N d(2a_n)\, d(2b_n),
$$
where $u_n=a_n+ib_n$, $(a_n,b_n)\in \R^2$ and
$$
\pi_N u=\sum_{0<|n|\leq N}u_ne^{inx},\qquad \overline{u_n}=u_{-n}.
$$
We have the following statement.
\begin{prop}\label{ch-var} One has the identity:
\begin{multline*}
\gamma_{N}^{-1}\int_{\Phi^N_t(A)} F_N (u) d\mu=
\int_{A} \prod_{j=0}^{k-2} \chi_R (E_{j/2}(\pi_N \Phi^N_t(u))) \times
\\
\chi_R (E_{(k-1)/2}(\pi_N \Phi^N_t(u))-\alpha_N)
  e^{-E_{k/2}(\pi_N(\Phi^N_t(u))} du_1...du_N\times d\mu^\perp_N.
  \end{multline*}
\end{prop}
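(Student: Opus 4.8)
The plan is to start from the definition of $F_N$ in \eqref{Femme+1} and rewrite it, inside the integral over $\Phi^N_t(A)$, purely in terms of the conserved energies $E_{j/2}$ evaluated on $\pi_N$ of the modified flow, and then apply a change of variables coming from the Hamiltonian/Liouville structure of $\tilde\Phi^N_t$. First I would recall the product decomposition $d\mu=\gamma_N e^{-\|\pi_N u\|_{H^{k/2}}^2}\,du_1\cdots du_N\times d\mu^\perp_N$ and observe that, since the high frequencies $\pi_{>N}u$ evolve linearly (hence unitarily on every $H^\rho$) under $\Phi^N_t$, the measure $d\mu^\perp_N$ on $E^\perp_N$ is invariant under the $E^\perp_N$-component of the flow; this lets one reduce the computation to the finite-dimensional factor $E_N$ together with the flow $\tilde\Phi^N_t$ of \eqref{fllow}, which acts only on $E_N$.

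Next I would make the change of variables $u=\Phi^N_t$-preimage on $E_N$: writing $v=\tilde\Phi^N_{-t}(w)$, the set $\Phi^N_t(A)$ in the $E_N$ variables is exactly $\tilde\Phi^N_t$ applied to the $E_N$-slice of $A$, so substituting pushes the integral back to $A$. The Jacobian of this substitution is $1$ because $\tilde\Phi^N_t$ is the flow of a Hamiltonian system on $E_N\cong\R^{2N}$ (the truncated Benjamin--Ono equation restricted to the first $N$ modes, with the symplectic form dual to $\partial_x$), so Liouville's theorem gives $du_1\cdots du_N=d(\tilde\Phi^N_t u)_1\cdots d(\tilde\Phi^N_t u)_N$. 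Under this substitution the Gaussian weight $e^{-\|\pi_N u\|_{H^{k/2}}^2}=e^{-\|\pi_N u\|_{\dot H^{k/2}}^2}$ is \emph{not} conserved (that is precisely the difficulty addressed later in the paper), so I would \emph{not} try to simplify it; instead I would simply transport it along, writing it as $e^{-\|\pi_N\tilde\Phi^N_t(u)\|_{\dot H^{k/2}}^2}$ after the change of variables — wait, that is backwards: after substituting $u\mapsto\tilde\Phi^N_{-t}u$ the factor becomes $e^{-\|\pi_N\tilde\Phi^N_t(\text{new }u)\|_{\dot H^{k/2}}^2}$, and I would then re-express $\|\pi_N\tilde\Phi^N_t(u)\|_{\dot H^{k/2}}^2 = E_{k/2}(\pi_N\tilde\Phi^N_t(u)) - R_{k/2}(\pi_N\tilde\Phi^N_t(u))$ using \eqref{strucRmezzi}, which combines with the $e^{-R_{k/2}(\pi_N u)}$ factor present in $F_N$ (also transported by the change of variables) to produce the single clean exponential $e^{-E_{k/2}(\pi_N\tilde\Phi^N_t(u))}$ claimed in the statement. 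The cutoff factors $\chi_R(E_{j/2}(\pi_N u))$ and $\chi_R(E_{(k-1)/2}(\pi_N u)-\alpha_N)$ are transported in the obvious way and become $\chi_R(E_{j/2}(\pi_N\tilde\Phi^N_t(u)))$ etc., and since $\pi_N\tilde\Phi^N_t=\pi_N\Phi^N_t$ on the relevant component (the difference is the freely evolving high-frequency part, killed by $\pi_N$), this matches the right-hand side.

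The step I expect to be the main obstacle is the bookkeeping that identifies the set $\Phi^N_t(A)$, which lives in the infinite-dimensional space $H^\sigma$, with the product of a $\tilde\Phi^N_t$-image in $E_N$ and a freely-evolved image in $E^\perp_N$, so that the product measure $d\mu$ factors correctly and Fubini applies; one must check that $\Phi^N_t$ genuinely decouples as $\tilde\Phi^N_t$ on $E_N$ times the linear flow on $E^\perp_N$ (this is the splitting $u_N=v_N+w_N$ from the proof of Lemma~\ref{lem:boundedness}), that $A$ being Borel makes all the slices measurable, and that the linear flow on $E^\perp_N$ preserves $d\mu^\perp_N$ — the last point because $d\mu^\perp_N$ is the Gaussian with covariance diagonal in the Fourier basis and the linear Benjamin--Ono flow only multiplies each Fourier coefficient by a unimodular phase $e^{\mathbf{i}t n^2\,\mathrm{sgn}(n)}$, under which such a Gaussian is invariant. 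Once this decoupling and the $\mathrm{Jacobian}=1$ fact from Liouville are in hand, the remaining manipulations are the elementary rewriting of $F_N$ via \eqref{strucRmezzi} and \eqref{Femme+1}, and the identity follows after multiplying both sides by $\gamma_N^{-1}$.
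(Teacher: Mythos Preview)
Your proposal is correct and follows essentially the same approach as the paper: the paper also isolates the two key invariances (Liouville for $\tilde\Phi^N_t$ on $E_N$ via the Hamiltonian structure, and rotational invariance of the Gaussian $d\mu^\perp_N$ under the linear flow $S(t)$ on $E_N^\perp$), uses the decoupling $\pi_N\Phi^N_t=\tilde\Phi^N_t\pi_N$, $\pi_{>N}\Phi^N_t=S(t)\pi_{>N}$, and then carries out the change of variables via Fubini and indicator functions, combining $e^{-\|\pi_N u\|_{\dot H^{k/2}}^2}$ with $e^{-R_{k/2}(\pi_N u)}$ into $e^{-E_{k/2}(\pi_N u)}$ exactly as you describe. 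The bookkeeping you flag as the main obstacle is precisely what the paper spells out with the indicator-function and Fubini manipulations.
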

\begin{proof}
We need the following two lemmas.
\begin{lem}\label{esp1}
The map $\tilde{\Phi}^N_t$ is measure preserving on $E_N$ equipped with the Lebesgue measure $du_1...du_N$.
\end{lem}
\begin{proof}
This is a consequence of the Liouville theorem, thanks to the hamiltonian structure of the ordinary differential equation defining the flow $\tilde{\Phi}^N_t$.
\end{proof}
\begin{lem}\label{esp2}
The map $S(t)=e^{-tH\partial_x^2}$ is measure preserving on $E_N^\perp$ equipped with the gaussian measure $ d\mu^\perp_N$. 
\end{lem}
\begin{proof}
This claim reflects the invariance of the gaussians distributions on $\R^2$ by rotations.
For a similar analysis, we refer to \cite[Proposition~2.10]{S} (which in turn follows the arguments in \cite[Theorem~1.2]{tz_fourier}).
First of all, clearly $E_N^\perp$ is invariant by $S(t)$.
For $M>N$, we denote by $E_N^M$ the finite dimensional real vector space spanned by $(\cos(nx),\sin(nx))$, where $N<n\leq M$.
We denote by $\mu_N^M$ the centered gaussian measure on $E_N^M$ induced by the series 
$$
\sum_{n=N+1}^M \frac{\varphi_n(\omega)}{|n|^{k/2}} e^{{\bf i}nx}\,.
$$
For $U$ an open set of $E^N$, we have
\begin{equation}\label{fatou}
\mu^\perp_N(U)\leq \liminf_{M\rightarrow\infty}\mu_N^M(U\cap E^M_N)\,.
\end{equation}
Indeed, for $M>N$, we set
$
U^M\equiv(u\in E_N^\perp \,|\, \pi_M u\in U).
$
Then using that $U$ is an open set, we get
$$
U\subset \liminf_{M\rightarrow\infty}(U^M)=\bigcup_{M=1}^{\infty} \bigcap_{M_1=M}^{\infty}U^{M_1}
$$
and therefore
$
\11(U)\leq \liminf_{M\rightarrow\infty}\11(U^M)\,,
$
where $\11$ denotes the indicator function of a set.
On the other hand 
$$
\mu_N^M(U\cap E^M_N)=\int_{E_N^\perp}\11(U^M)d\mu^\perp_N\,.
$$
Now,  \eqref{fatou} follows by an application of Fatou's lemma.
By passing to a complementary set in \eqref{fatou}, we get that for $F$ a closed set of $E^N$,
\begin{equation}\label{fatou-bis}
\mu^\perp_N(F)\geq \liminf_{M\rightarrow\infty}\mu_N^M(F\cap E^M_N)\,.
\end{equation}
Using that $H(\cos(nx))=  \sin(nx)$ and $H(\sin(nx))=-\cos(nx)$, we get
\begin{eqnarray*}
S(t)(\cos(nx)) &  =  &\cos(-tn^2+nx)  =  \cos(tn^2)\cos(nx)+\sin(tn^2)\sin(nx),
\\
S(t)(\sin(nx))   & =  &\sin(-tn^2+nx)  =  -\sin(tn^2)\cos(nx)+\cos(tn^2)\sin(nx).
\end{eqnarray*}
Therefore for fixed $t$ and $n$ the map $S(t)$ acts as a rotation on the two dimensional real vector space spanned by $\cos(nx)$ and $\sin(nx)$.
Hence by the invariance of the Lebesgue measure and the diagonal quadratic forms by rotations, any centered gaussian measure on the two dimensional space ${\rm span}(\cos(nx),\sin(nx))$ is invariant by $S(t)$. This implies that that the measure $\mu_N^M$ (which is a product of such measures) is invariant by $S(t)$.

Let $F$ be a closed set of $E_N^\perp$.  Then $S(t)(F)$ is also closed and thanks to \eqref{fatou-bis},
$$
\mu^\perp_N(S(t)(F)+\overline{B_{\varepsilon}})\geq \limsup_{M\rightarrow\infty}\mu_N^M((S(t)F+\overline{B_{\varepsilon}})\cap E^M_N),
$$
where $B_{\varepsilon}$ denotes the open ball of radius $\varepsilon$ in $E_N^\perp$ (recall that $E_N^\perp$ is equipped with the $H^\sigma$ topology). 
Since $S(t)$ acts as an isometry on $H^\sigma$ and since $E^M_N$ is invariant under $S(t)$,  for every $\varepsilon$ and every $M$,
$$
S(t)\big((F+B_{\varepsilon})\cap E^M_N\big)\subset (S(t)F+\overline{B_{\varepsilon}})\cap E^M_N.
$$
Therefore using the invariance of $\mu_N^M$ by $S(t)$ and \eqref{fatou}, we get
\begin{eqnarray*}
\mu^\perp_N(S(t)(F)+\overline{B_{\varepsilon}}) &\geq & \limsup _{M\rightarrow\infty}\mu_N^M\big(S(t)\big((F+B_{\varepsilon})\cap E^M_N\big)\big)
\\
& = &
\limsup _{M\rightarrow\infty}\mu_N^M\big((F+B_{\varepsilon})\cap E^M_N\big)
\\
& \geq &
\liminf _{M\rightarrow\infty}\mu_N^\perp(F+B_{\varepsilon})\geq \mu_N^\perp(F)\,.
\end{eqnarray*}
Letting $\varepsilon\rightarrow 0$ and using the Lebesgue theorem we get $\mu_N^\perp(F)\leq \mu_N^\perp(S(t)(F))$.
By the reversibility of $S(t)$, we get $\mu_N^\perp(F)= \mu_N^\perp(S(t)(F))$ for every closed set $F$ of $E^N$.
Finally by standard approximation arguments, we obtain that
$\mu_N^\perp(A)= \mu_N^\perp(S(t)(A))$ for every Borel set $A$ of $E^N$.
This completes the proof of Lemma~\ref{esp2}.
\end{proof}
Let us now turn to the proof of Proposition~\ref{ch-var}.
By definition we have the identities
\begin{equation}\label{def}
\pi_{N}\Phi^N_t=\tilde{\Phi}^N_t\pi_{N},
\quad
\pi_{>N}\Phi^N_t=S(t)\pi_{>N}.
\end{equation}
We can write
$$\gamma_N^{-1}\int_{\Phi^N_t(A)} F_N (u) d\mu=
\int_{\Phi^N_t(A)} 
H(\pi_{N} u)du_1...du_N \times d\mu^\perp_N
$$
where 
$$
H(\pi_{N} u)=\prod_{j=0}^{k-2} \chi_R (E_{j/2}(\pi_N u)) 
\chi_R (E_{(k-1)/2}(\pi_N u)-\alpha_N)  e^{-E_{k/2}(\pi_N(u))}.
$$
If we set $dL_N=du_1...du_N$ then we have
\begin{multline*}
\int_{\Phi^N_t(A)} 
H(\pi_{N} u)dL_N \times d\mu^\perp_N
=
\\
\int_{E_N}\int_{E_N^\perp}\11(\Phi^N_t(A))(\pi_{N}(u),\pi_{>N}(u))H(\pi_N u)dL_N \times d\mu^\perp_N
\end{multline*}
where again $\11$ denotes the indicator function of a measurable set. 
Using the Fubini theorem, we get
 \begin{multline*}
\int_{\Phi^N_t(A)} 
H(\pi_{N} u)dL_N \times d\mu^\perp_N
=
\\
\int_{E_N}H(\pi_N u)
\Big(
\int_{E_N^\perp}\11(\Phi^N_t(A))(\pi_{N}(u),\pi_{>N}(u)) d\mu^\perp_N\Big)dL_N.
\end{multline*}
By Lemma~\ref{esp2},
$$
\cdots=
\int_{E_N}H(\pi_N u)
\Big(
\int_{E_N^\perp}\11(\Phi^N_t(A))(\pi_{N}(u),S(t)\pi_{>N}(u)) d\mu^\perp_N\Big)dL_N.
$$
By another use of the Fubini theorem, we get
$$
\cdots=
\int_{E_N^\perp}
\Big(\int_{E_N}H(\pi_N u)\11(\Phi^N_t(A))(\pi_{N}(u),S(t)\pi_{>N}(u))dL_N \Big)d\mu^\perp_N
.$$
Now, Lemma~\ref{esp1} yields
$$
\cdots=
\int_{E_N^\perp}
\Big(\int_{E_N}H(
\tilde{\Phi}^N_t(\pi_N u))\11(\Phi^N_t(A))(
\tilde{\Phi}^N_t(\pi_{N}(u)),S(t)\pi_{>N}(u))dL_N \Big)d\mu^\perp_N.
$$
Coming back to \eqref{def}, we arrive at the identity
\begin{equation*}
\int_{\Phi^N_t(A)} 
H(\pi_{N} u)dL_N \times d\mu^\perp_N
=
\int_{H^\sigma}
H(\tilde{\Phi}^N_t(\pi_N u))\11(\Phi^N_t(A))(\Phi^N_t(u)) dL_N\times d\mu^\perp_N.
\end{equation*}
Since $\Phi^N_t$ is a bijection, we have that $\11(\Phi^N_t(A))(\Phi^N_t(u))=\11(A)(u)$.
We therefore obtain that
\begin{equation*}
\int_{\Phi^N_t(A)} 
H(\pi_{N} u)dL_N \times d\mu^\perp_N=\int_{A}H(\tilde{\Phi}^N_t(\pi_N u)) dL_N\times d\mu^\perp_N.
\end{equation*}
A final use of \eqref{def} completes the proof of Proposition~\ref{ch-var}.
\end{proof}
The next proposition plays a key role in our analysis.
\begin{prop}\label{lem1}
Let $t_0\in \R$. We have the following:
$$\lim_{N\rightarrow \infty}
\sup_{\substack{t\in [0, t_0]\\A\in {\mathcal B}(H^\sigma)}} 
\Big |\frac d{dt} \int_{\Phi_t^N(A)} F_N (u) d\mu\Big |=0.$$ 
\end{prop}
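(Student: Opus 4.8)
The plan is to differentiate the identity provided by Proposition~\ref{ch-var} under the integral sign and then estimate each resulting term using the asymptotic conservation laws of Proposition~\ref{ens}. Recall that Proposition~\ref{ch-var} gives
$$
\gamma_N^{-1}\int_{\Phi^N_t(A)} F_N(u)\,d\mu
= \int_A \Big(\prod_{j=0}^{k-2}\chi_R(E_{j/2}(\pi_N\Phi^N_t(u)))\Big)
\chi_R\big(E_{(k-1)/2}(\pi_N\Phi^N_t(u))-\alpha_N\big)\,
e^{-E_{k/2}(\pi_N\Phi^N_t(u))}\,dL_N\times d\mu^\perp_N.
$$
The key point is that the right-hand side is an integral over the fixed domain $A$, so I can bring $\frac{d}{dt}$ inside. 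Applying the product and chain rules, $\frac{d}{dt}$ of the integrand is a sum of terms, each of which carries a factor of the form $\frac{d}{dt}E_{j/2}(\pi_N\Phi^N_t(u))$ for some $j\in\{0,1,\dots,k\}$ (equivalently $E_{j_0/2}$ for $j_0=0,\dots,2m$, or $E_{m+1/2}$, or $E_{m+1}$), multiplied by bounded factors: the cutoff functions $\chi_R$ and $\chi_R'$ are bounded, and on the support of the cutoffs the quantity $E_{k/2}(\pi_N\Phi^N_t(u))=\|\pi_N\Phi^N_t u\|_{\dot H^{(k-1)/2}}^2+\cdots$ together with the factor $e^{-E_{k/2}}$ remains controlled uniformly in $N$ and $A$ (this is precisely the mechanism for which the truncated weighted measures were designed, cf.\ \cite{TV}).

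Next I would reduce the time-dependent estimate to $t=0$. The crucial observation — the ``key property at $t=0$'' mentioned in the introduction — is that $\frac{d}{dt}E_{j/2}(\pi_N\Phi^N_t(u))\big|_{t}$ equals $\frac{d}{d\tau}E_{j/2}(\pi_N\Phi^N_\tau(v))\big|_{\tau=0}$ with $v=\Phi^N_t(u)$, because $\Phi^N$ is a flow; combined with the fact that $\Phi^N_t$ preserves the measure $dL_N\times d\mu^\perp_N$ restricted suitably (Lemmas~\ref{esp1}, \ref{esp2}) and with a change of variables as in Proposition~\ref{ch-var}, the $L^q$ norm in $u$ of the time-$t$ derivative is bounded by the $L^q(d\mu_{m+1})$ norm of the corresponding time-zero derivative $G_N$, $H_N$, or $L_N^{j_0}$. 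After pulling out the bounded cutoff and exponential factors and applying H\"older's inequality (with $q$ and its conjugate exponent), $\big|\frac{d}{dt}\int_{\Phi^N_t(A)}F_N\,d\mu\big|$ is dominated, uniformly over $t\in[0,t_0]$ and $A\in\mathcal B(H^\sigma)$, by a constant times
$$
\|G_N(u)\|_{L^q(d\mu_{m+1})}+\|H_N(u)\|_{L^q(d\mu_{m+1})}+\sum_{j_0=0}^{2m}\|L_N^{j_0}(u)\|_{L^q(d\mu_{m+1})}.
$$
By Proposition~\ref{ens} this tends to $0$ as $N\to\infty$, which is the claim.

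The main obstacle is making rigorous the interchange of $\frac{d}{dt}$ with $\int_A$ and the reduction-to-$t=0$ step. For the differentiation under the integral sign one needs a uniform (in $t$ near the given value, in $A$, and ultimately integrable in $u$) dominating function for the difference quotients; this requires knowing that $t\mapsto E_{j/2}(\pi_N\Phi^N_t(u))$ is $C^1$ with derivative controlled on the support of the cutoffs, which in turn rests on the deterministic well-posedness and boundedness results of Section~\ref{detsec} (in particular Lemma~\ref{lem:boundedness}) for the truncated flow on a fixed Sobolev space. For the reduction to $t=0$, the subtle point is that one is commuting an $N\to\infty$ limit with a $t$-derivative; the fact that this is legitimate is exactly what the measure-invariance of $\Phi^N_t$ buys us, since it lets us transport the estimate at time $t$ back to the already-established estimate at time $0$ without losing uniformity in $A$. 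Once these two points are in place, the rest is H\"older's inequality and invoking Proposition~\ref{ens}.
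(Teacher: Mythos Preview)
Your approach is essentially the paper's: differentiate the identity of Proposition~\ref{ch-var}, identify the factors $G_N$, $H_N$, $L_N^{j_0}$, and conclude via H\"older and Proposition~\ref{ens}. The one substantive difference is in how you reduce from general $t$ to $t=0$. You propose a change of variables $v=\Phi^N_t(u)$ inside the integral together with the measure invariance of Lemmas~\ref{esp1} and~\ref{esp2}; this works but forces you to confront the technical issues you flag in your last paragraph (domination for differentiation under the integral at arbitrary $t$, uniformly in $N$). The paper sidesteps all of that: it simply observes, from the group property $\Phi^N_{\bar t+h}=\Phi^N_h\circ\Phi^N_{\bar t}$, that
$$
\frac{d}{dt}\Big(\int_{\Phi^N_t(A)} F_N\,d\mu\Big)_{t=\bar t}
=\frac{d}{dt}\Big(\int_{\Phi^N_t(\tilde A)} F_N\,d\mu\Big)_{t=0},\qquad \tilde A=\Phi^N_{\bar t}(A).
$$
Since $\tilde A$ is again a Borel set and the $t=0$ bound was already proved \emph{uniformly over all $A\in\mathcal B(H^\sigma)$}, the general-$t$ case follows for free. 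This eliminates the ``main obstacle'' you describe and is the cleaner route. (A minor slip: you wrote $E_{k/2}=\|\cdot\|_{\dot H^{(k-1)/2}}^2+\cdots$; the leading term is $\|\cdot\|_{\dot H^{k/2}}^2$, and after absorbing it into $d\mu$ the remaining weight is $e^{-R_{k/2}(\pi_N u)}$, whose $L^{q'}(d\mu)$ control comes from Theorem~\ref{mainold}.)
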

\begin{proof}
{\bf First step: estimate for $t=0$}
\\
\\
We have to show 
\begin{equation}\label{first_step}
\lim_{N\rightarrow \infty} \sup_{A\in {\mathcal B}(H^\sigma)}
\Big |\frac d{dt} \Big (\int_{\Phi^N_t(A)} F_N (u) d\mu \Big)_{t=0}\Big |=0.
\end{equation}
As a consequence of Proposition~\ref{ch-var}, we deduce
$$\frac d{dt}\Big(\int_{\Phi_t^N(A)} F_N (u) d\mu\Big)_{t=0}=
$$
$$\int_{A} G_N(u) \prod_{j=0}^{k-2} \chi_R (E_{j/2}(\pi_N (u))) 
\chi_R (E_{(k-1)/2}(\pi_N (u))-\alpha_N) 
e^{-R_{k/2}(\pi_N u)} d\mu+$$
$$+ \int_{A} H_N(u) \prod_{j=0}^{k-2} \chi_R (E_{j/2}(\pi_N (u))) 
\chi_R' (E_{(k-1)/2}(\pi_N (u))-\alpha_N) 
e^{-R_{k/2}(\pi_N u)} d\mu+$$
$$\sum_{j_0}\int_{A} L_N^{j_0}(u)  \chi_R' (E_{j_0/2}(\pi_N u)) \prod_{\substack{j=0\\ j\neq j_0}}^{k-2} \chi_R (E_{j/2}(\pi_N u)) 
\chi_R (E_{(k-1)/2}(\pi_N u)-\alpha_N)  \times $$
$$e^{-R_{k/2}(\pi_N u)} d\mu$$
where $G_N(u), H_N(u), L_N^{j_0}(u)$ for $j_0=0,...,k-2$
are respectively defined in \eqref{eq:G}, \eqref{eq:H} and \eqref{eq:L}.
Thanks to Proposition~\ref{ens} (recall that we are assuming $k=2(m+1)$) and the H\"older inequality, we obtain 
\eqref{first_step}.
\\
{\bf Second step: estimate for $\bar t\in (0, t_0)$}
\\
\\
We have
$$\frac d{dt} \Big (\int_{\Phi^N_t(A)} F_N (u) d\mu\Big)_{t=\bar t}
=\lim_{h\rightarrow 0} h^{-1}\Big (\int_{\Phi^N_{\bar t+h}(A)} F_N (u) d\mu -
\int_{\Phi^N_{\bar t}(A)} F_N (u) d\mu\Big)$$
$$=\lim_{h\rightarrow 0} h^{-1}\Big (\int_{\Phi_h^N\circ \Phi^N_{\bar t}(A)} F_N (u) d\mu -
\int_{\Phi^N_{\bar t}(A)} F_N (u) d\mu\Big)$$
and hence
$$\frac d{dt} \Big (\int_{\Phi^N_t(A)} F_N (u) d\mu
\Big )_{t=\bar t}= \frac d{dt}\Big (\int_{\Phi_t^N(\tilde A)} F_N (u) d\mu\Big )_{t=0}$$
where $\tilde A=\Phi_{\bar t}^N(A)$.
The result follows by the first step.
This completes the proof of Proposition~\ref{lem1}.
\end{proof}
\begin{lem}\label{lem2}
For any given $t_0\in \R$, $A\in {\mathcal B}(H^\sigma)$ we have:
$$\lim_{N\rightarrow \infty} \Big (\int_A F_N(u) d\mu - \int_{\Phi_{t}^N(A)} F_N(u) d\mu\Big )
=0,  \hbox{ } \forall 
t\in [0, t_0].$$ 
\end{lem}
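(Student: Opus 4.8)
The plan is to deduce Lemma~\ref{lem2} directly from Proposition~\ref{lem1} by integrating the time derivative. Fix $t_0\in\R$ and $A\in{\mathcal B}(H^\sigma)$, and for each $N\in\N$ set
$$
g_N(t)=\int_{\Phi^N_t(A)}F_N(u)\,d\mu,\qquad t\in[0,t_0].
$$
Since $\Phi^N_0$ is the identity map, $\Phi^N_0(A)=A$, so $g_N(0)=\int_A F_N(u)\,d\mu$ and the quantity whose limit we must compute is precisely $g_N(0)-g_N(t)$. It therefore suffices to prove that $g_N(t)-g_N(0)\to0$ as $N\to\infty$, uniformly for $t\in[0,t_0]$ (which is stronger than the pointwise statement we are asked for; the case of negative times is identical after the obvious sign changes).

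The first step is to note that, for each fixed $N$, the map $t\mapsto g_N(t)$ is Lipschitz on $[0,t_0]$. Indeed, combining the change-of-variables identity of Proposition~\ref{ch-var} with the group property $\Phi^N_{\bar t+h}=\Phi^N_h\circ\Phi^N_{\bar t}$, exactly as in the proof of Proposition~\ref{lem1}, shows that $g_N'(\bar t)$ exists for every $\bar t\in[0,t_0]$ and is a finite sum of integrals over $A$ of $G_N$, $H_N$, $L^{j_0}_N$ against bounded cut-off factors times $e^{-R_{k/2}(\pi_N u)}$; each such integral is finite by Hölder's inequality together with Theorem~\ref{mainold} (the same estimate already used in the proof of Proposition~\ref{lem1}). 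By the mean value theorem, $g_N$ is then Lipschitz with constant
$$
M_N:=\sup_{\substack{\tau\in[0,t_0]\\B\in{\mathcal B}(H^\sigma)}}\Big|\frac{d}{dt}\int_{\Phi^N_t(B)}F_N(u)\,d\mu\Big|<\infty,
$$
hence absolutely continuous, so that for every $t\in[0,t_0]$
$$
|g_N(t)-g_N(0)|=\Big|\int_0^t g_N'(\tau)\,d\tau\Big|\le t_0\,M_N.
$$
By Proposition~\ref{lem1}, $M_N\to0$ as $N\to\infty$, and letting $N\to\infty$ in the previous inequality yields the claim.

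I do not expect a real obstacle here: all the analytic difficulty has already been absorbed into Proposition~\ref{lem1} (and through it into Proposition~\ref{ens}), and the present lemma merely upgrades a uniform-in-time bound on the derivative of $g_N$ to a uniform-in-time bound on its increment. The only point that deserves a line of justification is the Lipschitz regularity of $t\mapsto g_N(t)$ for fixed $N$, and this is mild since $\tilde{\Phi}^N_t$ is the flow of a smooth finite-dimensional Hamiltonian system and $F_N$ lies in every $L^q(d\mu)$ by Theorem~\ref{mainold}.
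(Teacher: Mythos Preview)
Your proposal is correct and follows essentially the same approach as the paper: the paper's proof simply reads ``It follows by the fundamental theorem of calculus in conjunction with Proposition~\ref{lem1},'' which is exactly what you carry out (with the additional service of spelling out why $t\mapsto g_N(t)$ is regular enough for the fundamental theorem of calculus to apply).
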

\begin{proof}
It follows by the fundamental theorem of calculus in conjunction
with Proposition~\ref{lem1}.
\end{proof}
\begin{lem}\label{lem3}
For every $R>0$ there exists $\bar t=\bar t(R)>0$ such that for every compact set $K\subset H^\sigma$, 
with $K\subset B^\sigma(R)$ we have
 $$
 \int_K F(u)  d\mu\leq \int_{\Phi_{t}(K)} F(u)d\mu, \hbox{ } \forall 
t\in (-\bar t, \bar t).$$
\end{lem}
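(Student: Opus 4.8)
The plan is to pass to the limit $N\to\infty$ in the approximate statement provided by Lemma~\ref{lem2}, using the deterministic convergence of the truncated flows established in Proposition~\ref{prop:det} together with the $L^q$-convergence $F_N\to F$ from Theorem~\ref{mainold}. First I would fix $R>0$ and take $\bar t=\bar t(R)>0$ to be the time given by Proposition~\ref{prop:det} (equivalently by Lemma~\ref{lem:boundedness}); this is the value for which $\Phi_t^N(A)\subset\Phi_t(A)+B^s(\varepsilon)$ for all $N>N_0(\varepsilon)$, $t\in(-\bar t,\bar t)$ and $A\subset B^\sigma(R)$. Fix a compact set $K\subset B^\sigma(R)$ and $t\in(-\bar t,\bar t)$. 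By Lemma~\ref{lem2} we have
$$
\int_K F_N(u)\,d\mu-\int_{\Phi_t^N(K)}F_N(u)\,d\mu\longrightarrow 0\qquad (N\to\infty),
$$
so it suffices to show that $\int_K F_N\,d\mu\to\int_K F\,d\mu$ and that $\liminf_{N\to\infty}\int_{\Phi_t^N(K)}F_N\,d\mu\le\int_{\Phi_t(K)}F\,d\mu$. The first convergence is immediate from Theorem~\ref{mainold}: since $F_N\to F$ in $L^1(d\mu)$ (indeed in every $L^q$), integration over the fixed set $K$ converges.

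For the second, the idea is that $\Phi_t^N(K)$ is, for large $N$, contained in an arbitrarily small $H^s$-neighbourhood of the compact set $\Phi_t(K)$. Concretely, given $\varepsilon>0$ pick $N_0(\varepsilon)$ from Proposition~\ref{prop:det}, so that $\Phi_t^N(K)\subset\Phi_t(K)+B^s(\varepsilon)$ for all $N>N_0(\varepsilon)$. Hence
$$
\int_{\Phi_t^N(K)}F_N(u)\,d\mu\le\int_{\Phi_t(K)+B^s(\varepsilon)}F_N(u)\,d\mu
=\int_{\Phi_t(K)+B^s(\varepsilon)}F(u)\,d\mu+\int_{\Phi_t(K)+B^s(\varepsilon)}\big(F_N(u)-F(u)\big)\,d\mu,
$$
and the last integral tends to $0$ as $N\to\infty$ by the $L^1$-convergence $F_N\to F$, uniformly in the set of integration. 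Therefore $\limsup_{N\to\infty}\int_{\Phi_t^N(K)}F_N\,d\mu\le\int_{\Phi_t(K)+B^s(\varepsilon)}F\,d\mu$. Now let $\varepsilon\to 0$: since $\Phi_t(K)$ is compact in $H^\sigma$ — it is the continuous image of the compact set $K$ under the $H^\sigma$-continuous map $\Phi_t$, the continuity being part of the well-posedness theory of \cite{M} together with the uniform bound \eqref{eq:ni} of Lemma~\ref{lem:boundedness} — it is in particular closed in $H^s$, so $\bigcap_{\varepsilon>0}\big(\Phi_t(K)+B^s(\varepsilon)\big)=\Phi_t(K)$, and by dominated convergence (with dominating function $F\in L^1(d\mu)$) we get $\int_{\Phi_t(K)+B^s(\varepsilon)}F\,d\mu\to\int_{\Phi_t(K)}F\,d\mu$. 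Combining the three displayed facts yields
$$
\int_K F(u)\,d\mu=\lim_{N\to\infty}\int_K F_N(u)\,d\mu=\lim_{N\to\infty}\int_{\Phi_t^N(K)}F_N(u)\,d\mu\le\int_{\Phi_t(K)}F(u)\,d\mu,
$$
which is the claimed inequality.

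The main obstacle is the measurability/topology bookkeeping in the second step: one must make sure that $\Phi_t(K)$ and each $\Phi_t(K)+B^s(\varepsilon)$ are Borel (compactness of $\Phi_t(K)$ and openness of the thickening handle this), that the deterministic inclusion of Proposition~\ref{prop:det} really applies to the compact set $K$ (it does, since $K\subset B^\sigma(R)$), and — most delicately — that the loss of one derivative (the inclusion is only in $H^s$ with $s<\sigma$, not in $H^\sigma$) is harmless. It is harmless precisely because $\mu=\mu_{k/2}$ is supported in $H^s$ for $s<(k-1)/2$, so $F\,d\mu$ is a finite Borel measure on $H^s$ and the $H^s$-thickening is the natural object; this is exactly why the statement is phrased with the auxiliary exponent $s$ and why Proposition~\ref{prop:det} was proved in the $H^s$ topology. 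Note also that the reverse inequality will in general require a separate argument (applying this lemma with $\Phi_{-t}$ and the compact set $\Phi_t(K)$, plus an approximation of Borel sets by compacts), which is presumably carried out in the remainder of the proof of Theorem~\ref{lem4}.
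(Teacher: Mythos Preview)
Your proof is correct and follows essentially the same route as the paper's: both use Lemma~\ref{lem2}, the $L^1$-convergence $F_N\to F$, the inclusion $\Phi_t^N(K)\subset\Phi_t(K)+B^s(\varepsilon)$ from Proposition~\ref{prop:det}, and the closedness of $\Phi_t(K)$ in $H^s$ to pass to the limit $\varepsilon\to 0$. The only cosmetic difference is that you bound $\int_{\Phi_t^N(K)}F_N\,d\mu$ directly (using $F_N\geq 0$ and then splitting $F_N=F+(F_N-F)$), whereas the paper first switches from $F_N$ to $F$ and then uses $F\geq 0$ for the set-inclusion step; both variants are equivalent.
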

\begin{proof}
By Lemma \ref{lem2} we get 
$$\int_{\Phi_{t}^N(K)} F_N(u) d\mu =
\int_{K} F_N(u) d\mu + o(1), \hbox{ } \forall 
t\in \R
$$
where $\lim_{N\rightarrow \infty} o(1)=0$. Moreover
$F_N\rightarrow F$ in $L^1(d\mu)$ and we get
\begin{equation}\label{krai1}
\lim_{N\rightarrow \infty}
\int_{\Phi_{t}^N(K)} F_N(u) d\mu=\lim_{N\rightarrow \infty}
\int_{K} F_N(u) d\mu= \int_K F(u) d\mu, \hbox{ } \forall t\in \R.
\end{equation}
By Proposition~\ref{prop:det}
we get $\bar t=\bar t(R)>0$ such that for every $\epsilon>0$  there exists a suitable $N_0(\epsilon)$ with the property
\begin{equation}\label{eq:inequ}\sup_{N>N_0(\epsilon)} \int_{\Phi_{t}^N(K)} F(u)d\mu\leq 
\int_{\Phi_{t}(K)+ B^s(\epsilon)}
F(u) d\mu, \hbox{ } \forall 
t\in (- \bar t , \bar t).\end{equation}
We estimate the l.h.s. as follows:
\begin{equation}\label{eq:limsup}\sup_{N>N_0(\epsilon)} \int_{\Phi_{t}^N(K)} F(u)d\mu\geq \lim_{N\rightarrow \infty}
\int_{\Phi_{t}^N(K)} F(u) d\mu.
\end{equation}
On the other hands we have that $K$ is closed in $H^s$ and since $\Phi_t$ is a diffeomorphism on $H^s$
also $\Phi_t(K)$ is also closed in $H^s$. As a consequence we deduce
$$\bigcap_{\epsilon>0}  (\Phi_t(K) + B^s(\epsilon))= \Phi_t(K)$$
and hence by the Lebesgue theorem we deduce that 
the r.h.s. in \eqref{eq:inequ} converges to
$\int_{\Phi_{t}(K)}
F(u) d\mu$ as $\epsilon\rightarrow 0$. By combining this fact with
\eqref{eq:limsup}
then we get
$$\lim_{N\rightarrow \infty} \int_{\Phi^N_{t}(K)} F_N(u) d\mu\leq \int_{\Phi_{t}(K)} F(u)d\mu,
\hbox{ } \forall 
t\in (-\bar t, \bar t).$$
The proof of Lemma~\ref{lem3} can be completed by combining the last inequality with \eqref{krai1}.
\end{proof}
Next we iterate the last lemma to get the following statement.
\begin{lem}\label{lem3pak}
Let $t_0\in\R$. Then for every compact $K\subset H^\sigma$ 
we get
 $$
 \int_K F(u)  d\mu\leq \int_{\Phi_{t_0}(K)} F(u)d\mu.
$$
\end{lem}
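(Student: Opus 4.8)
The plan is to bootstrap the short-time estimate of Lemma~\ref{lem3} to arbitrary times by iterating along a partition of $[0,t_0]$ (or $[t_0,0]$ if $t_0<0$; I treat $t_0>0$, the other case being symmetric). The obstacle is that the time step $\bar t=\bar t(R)$ in Lemma~\ref{lem3} depends on the radius $R$ of the ball containing the compact set, and the image $\Phi_{\bar t}(K)$ of a compact set sitting in $B^\sigma(R)$ need not sit in the same ball: a priori its $H^\sigma$-radius has grown. So a naive iteration with a fixed step size fails. The resolution is to invoke Lemma~\ref{lem:boundedness}, which provides an \emph{a priori} bound
$$
M:=\sup_{\substack{t\in[0,t_0]\\ u_0\in B^\sigma(R)}}\|\Phi_t(u_0)\|_{H^\sigma}<\infty,
$$
valid on the \emph{whole} interval $[0,t_0]$ (here I take $T=t_0$ in that lemma). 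Hence, once $K\subset B^\sigma(R)$, every forward image $\Phi_t(K)$ with $t\in[0,t_0]$ is contained in the fixed ball $B^\sigma(M)$, and on $B^\sigma(M)$ Lemma~\ref{lem3} gives a \emph{uniform} step size $\tau:=\bar t(M)>0$.

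With this uniform step in hand the argument is routine: pick an integer $L$ with $t_0/L<\tau$, set $t_j=jt_0/L$ for $j=0,\dots,L$, and $K_0:=K$, $K_{j+1}:=\Phi_{t_0/L}(K_j)$. Each $K_j$ is compact (continuous image of a compact set under the homeomorphism $\Phi_{t_0/L}$ on $H^\sigma$), and by the a priori bound $K_j=\Phi_{t_j}(K)\subset B^\sigma(M)$. Applying Lemma~\ref{lem3} on $B^\sigma(M)$ with time increment $t_0/L\in(-\tau,\tau)$ to the compact set $K_j$ yields
$$
\int_{K_j}F(u)\,d\mu\le\int_{\Phi_{t_0/L}(K_j)}F(u)\,d\mu=\int_{K_{j+1}}F(u)\,d\mu,\qquad j=0,\dots,L-1.
$$
Chaining these $L$ inequalities and using the group property $\Phi_{t_0/L}\circ\cdots\circ\Phi_{t_0/L}=\Phi_{t_0}$ gives $\int_K F\,d\mu\le\int_{K_L}F\,d\mu=\int_{\Phi_{t_0}(K)}F\,d\mu$, which is the claim.

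The main point to be careful about is exactly the one flagged above: the step size must be chosen \emph{after} fixing the radius $M$ coming from Lemma~\ref{lem:boundedness}, not from the radius of the initial ball, since the intermediate sets $K_j$ drift outward. One should also note $K\subset B^\sigma(R)$ for some finite $R$ because $K$ is compact, so Lemma~\ref{lem:boundedness} is applicable; all remaining ingredients (compactness of the iterates, the homeomorphism property of $\Phi_t$ on $H^\sigma$ from Molinet's theory, the semigroup law) are standard. For $t_0<0$ one runs the same scheme with Lemma~\ref{lem3} applied for negative increments, the backward-in-time version of Lemma~\ref{lem:boundedness} being identical by time-reversibility of \eqref{bo}.
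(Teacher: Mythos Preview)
Your proof is correct and follows essentially the same route as the paper: both invoke Lemma~\ref{lem:boundedness} to confine the whole orbit $\{\Phi_t(K):t\in[0,t_0]\}$ in a single ball $B^\sigma(M)$, then take the uniform step size $\bar t(M)$ from Lemma~\ref{lem3} and iterate along a partition of $[0,t_0]$. Your exposition is in fact a bit more explicit than the paper's about why the step size must be chosen with respect to the orbit radius $M$ rather than the initial radius $R$, but the argument is the same.
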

\begin{proof}
We give the proof only for $t_0$ positive, the analysis for negative $t_0$ is completely analogous.
Notice that by Lemma \ref{lem:boundedness} we can fix $R>0$ such that
\begin{equation}\label{eq:minult147}
\{\Phi_t (K)| t\in [0, t_0]\}\subset B^\sigma(R).
\end{equation}
Next we consider $\bar t=\bar t(R)\in (0, t_0]$ given in Lemma \ref{lem3} and we 
choose $\tilde t$ such that $$\tilde t\in (0, \bar t] \hbox{ and }
\frac{t_0}{\tilde t}\in \N.$$
By Lemma~\ref{lem3} we get
$$
\int_K F(u)  d\mu\leq \int_{\Phi_{\tilde t}(K)} F(u)d\mu.$$
Notice that by \eqref{eq:minult147} we have that
$\Phi_{\tilde t}(K)\subset B^\sigma(R)$ hence Lemma~\ref{lem3} can be iterated
and we obtain
$$\int_{\Phi_{\tilde t}(K)} F(u) d\mu \leq \int_{\Phi_{\tilde t}(\Phi_{\tilde t}(K))} F(u) d\mu
= \int_{\Phi_{2 \tilde t}(K)} F(u) d\mu.$$
By repeating this argument $N_0$ times, where $N_0\tilde t=t_0$, we get
$$\int_{\Phi_{(j-1)\tilde t}(K)} F(u) d\mu \leq \int_{\Phi_{j \tilde t}(K)} F(u) d\mu,
\hbox{ } \forall j=1,..., N$$
and hence by the above chain of inequalities we deduce
$$\int_{K} F(u) d\mu \leq \int_{\Phi_{t_0}(K)} F(u) d\mu.$$
This completes the proof of Lemma~\ref{lem3pak}.
\end{proof}
Using the reversibility of the flow, we now obtain the statement.
\begin{lem}\label{lem3pakpak}
Let $t_0\in\R$. Then for every compact $K\subset H^\sigma$ we have
 $$
 \int_K F(u)  d\mu= \int_{\Phi_{t_0}(K)} F(u)d\mu.
$$
\end{lem}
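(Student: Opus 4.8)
The plan is to upgrade the one-sided inequality of Lemma~\ref{lem3pak} to an equality by exploiting the time-reversibility of the Benjamin-Ono flow. First I would note that $\Phi_{t_0}$ is a bijection on $H^\sigma$ with inverse $\Phi_{-t_0}$, and that for a compact set $K\subset H^\sigma$ the image $\Phi_{t_0}(K)$ is again compact (the flow $\Phi_t$ is continuous, indeed a diffeomorphism, on $H^\sigma$ by the global well-posedness theory of Molinet combined with Lemma~\ref{lem:boundedness}). This is the structural fact that lets us apply Lemma~\ref{lem3pak} a second time, with a different compact set and with $t_0$ replaced by $-t_0$.

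The key step is to apply Lemma~\ref{lem3pak} twice. Applying it directly to $K$ gives
$$
\int_K F(u)\,d\mu\leq \int_{\Phi_{t_0}(K)}F(u)\,d\mu.
$$
Now set $\tilde K=\Phi_{t_0}(K)$, which is compact by the remark above, and apply Lemma~\ref{lem3pak} to $\tilde K$ with $t_0$ replaced by $-t_0$ (the lemma is stated for every $t_0\in\R$, so this is legitimate). Since $\Phi_{-t_0}(\tilde K)=\Phi_{-t_0}(\Phi_{t_0}(K))=K$ by the group property of the flow, this yields
$$
\int_{\Phi_{t_0}(K)}F(u)\,d\mu=\int_{\tilde K}F(u)\,d\mu\leq \int_{\Phi_{-t_0}(\tilde K)}F(u)\,d\mu=\int_K F(u)\,d\mu.
$$
Combining the two inequalities forces
$$
\int_K F(u)\,d\mu=\int_{\Phi_{t_0}(K)}F(u)\,d\mu,
$$
which is exactly the assertion of Lemma~\ref{lem3pakpak}.

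There is essentially no obstacle here: the argument is a soft symmetrization and the only things one needs are that $\Phi_{t_0}$ maps compacts to compacts and that Lemma~\ref{lem3pak} is available for both signs of time, both of which are already in hand. The one point worth stating explicitly is the compactness of $\Phi_{t_0}(K)$, so that Lemma~\ref{lem3pak} genuinely applies to $\tilde K$; this follows since $\Phi_{t_0}$ is continuous on $H^\sigma$ and continuous images of compact sets are compact. I would present the proof in three lines, invoking Lemma~\ref{lem3pak} twice and the flow identity $\Phi_{-t_0}\circ\Phi_{t_0}=\mathrm{Id}$.
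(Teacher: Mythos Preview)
Your proof is correct and follows essentially the same approach as the paper: apply Lemma~\ref{lem3pak} once to $K$ and once to $\tilde K=\Phi_{t_0}(K)$ with time $-t_0$, using that $\Phi_{t_0}(K)$ is compact because $\Phi_{t_0}$ is a diffeomorphism and $\Phi_{-t_0}\circ\Phi_{t_0}=\mathrm{Id}$. The paper's presentation is slightly terser but the argument is identical.
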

\begin{proof}
Using  Lemma~\ref{lem3pak}, we can write
 $$
\int_{\tilde K} F(u) d\mu \leq \int_{\Phi_{-t_0}(\tilde K)} F(u) d\mu
$$
for every compact $\tilde K\subset H^\sigma$.
By choosing now $\tilde K= \Phi_{t_0}(K)$ (notice that it is compact since $K$ is compact and the flow 
$\Phi_{t_0}$ is a diffeomorphism), then we get
$$\int_{\Phi_{t_0}(K)} F(u) d\mu\leq \int_{K} F(u) d\mu.
$$
This completes the proof of Lemma~\ref{lem3pakpak}, since the opposite inequality is proved in
Lemma \ref{lem3pak}.
\end{proof}
Let us now complete the  proof of Theorem~\ref{lem4}.
Let $A$ be an arbitrary Borel set in $H^\sigma$.
It is well--known that there exists a sequence
of compact sets $K_n\subset A$ such that $$\lim_{n\rightarrow \infty} 
\int_{K_n} F(u) d\mu= \int_A F(u) d\mu.$$
On the other hands by Lemma \ref{lem3pakpak} we have
$$\int_{K_n} F(u) d\mu= \int_{\Phi_{t_0}(K_n)} F(u) d\mu\leq \int_{\Phi_{t_0}(A)} F(u) d\mu$$
(where at the last step we used the property $\Phi_{t_0}(K_n)\subset \Phi_{t_0}(A)$
in conjunction with the positivity  of $F(u)$).
As a consequence we get 
$$\int_A F(u) d\mu\leq \int_{\Phi_{t_0}(A)} F(u) d\mu.$$
The opposite inequality can be proved by using the reversibility of the flow in the same spirit as in
Lemma \ref{lem3pakpak}. This completes the proof of Theorem~\ref{lem4}.

\end{document}